\theoremstyle{definition}
\newtheorem{thm}{Theorem}                         
\newtheorem*{thm*}{Theorem}  
\newtheorem{fact}{Fact} 
\newtheorem{cor}{Corollary}
\newtheorem*{cor*}{Corollary}
\newtheorem{lem}{Lemma}
\newtheorem*{lem*}{Lemma}
\newtheorem*{conj}{$\Delta^2$ Conjecture}
\newtheorem*{con}{Conjecture}
\newtheorem{defin}{Definition}
\newcommand{\HRule}{\rule{\linewidth}{0.5mm}}
\title{Brief Article}
\begin{document}
%
%
%
%
%
%


\begin{titlepage}

\center 
 

\textsc{\LARGE University of South Carolina}\\[1.5cm] 
\textsc{\Large Senior Thesis}\\[0.5cm] 
\textsc{\large Submitted in partial fulfillment of the requirements for the Bachelor of Science with Distinction in Mathematics}\\[0.5cm] 


\HRule \\[0.4cm]
{ \LARGE \bfseries Graph Labeling with Distance Conditions and the Delta Squared Conjecture}\\[0.4cm] 
\HRule \\[1.5cm]
 

\begin{minipage}{0.4\textwidth}
\begin{flushleft} \large
\emph{Author:}\\
Cole \textsc{Franks} 
\end{flushleft}
\end{minipage}
~
\begin{minipage}{0.4\textwidth}
\begin{flushright} \large
\emph{Advisor:} \\
Dr. Jerry \textsc{Griggs}\\
\emph{Second Reader:} \\
Dr.  Linyuan  \textsc{Lu}\\ 
\emph{Committee Member:} \\
Dr.  L\'{a}zl\'{o} \textsc{Sz\'{e}kely}
\end{flushright}
\end{minipage}\\[3cm]


{\large May, 2013}\\[3cm] 


 

\vfill 

\end{titlepage}

\begin{abstract}
An $L({2,1})$-labeling of a simple graph $G$ is a function $f:V(G) \rightarrow \mathbb{Z}$ such that if $xy \in E(G)$, then $|f(x) - f(y)| \geq 2$ and if the shortest path in $G$ connecting $x$ and $y$ has two edges then $|f(x) - f(y)| \geq 1$. The $L(2,1)$-labeling problem is an example of a Channel Assignment Problem, which aims to model the frequency assignment of transmitters which could interfere when near. The objective is generally to minimize the difference between the highest and lowest label used, called the span. The $L(2,1)$-labeling problem was posed by Jerrold Griggs and Roger Yeh in 1992, and they conjectured that any graph has a labelling with span no greater than the square of $\Delta(G)$, the maximum degree, if $\Delta(G)\geq 2$. We first review some previous results about $L(2,1)$-labelings, and then we cover some methods in hamilton paths and coloring that will be used throughout this thesis. We then prove that  if the order of $G$ is at most $(\lfloor \Delta/2  \rfloor + 1 )(\Delta^2 - \Delta+ 1) - 1$, then $G$ has an $L(2,1)$- labeling with span no greater than $\Delta^2$. This shows that for graphs no larger than the given order, the 1992 ``$\Delta^2$ Conjecture" of Griggs and Yeh holds. In fact, we prove more generally that if $L \geq \Delta^2 +1$, $\Delta\geq 1$ and $$|V(G)| \leq (L - \Delta)\left(\left\lfloor\frac{L-1}{2\Delta} \right\rfloor + 1\right) - 1,$$ then $G$ has an $L(2,1)$-labeling with span at most  $L - 1$. In addition, we show that there is a polynomial time algorithm to find $L(2,1)$-labelings with span and order satisfying the conditions above. We also exhibit an infinite family of graphs with minimum span $\Delta^2 - \Delta + 1$, which is the largest of any known infinite family.																														
\end{abstract}
\newpage
\tableofcontents

\section{Summary}\label{sum}

The \emph{channel assignment problem} is the assignment of channels to stations such that those stations close enough to interfere receive distant enough frequencies. Interference between stations occurs when frequencies are too similar.  The $L(2,1)$-labeling problem is an instance of the channel assignment problem in which closer transmitters would be required to have channels that differed by more than those of the slightly more distant transmitters. In this problem, graphs are used to model networks, and the physical distance between two nodes is modeled by the minimum number of hops between them in the network. A graph is a set of vertices, or points, and edges between these points. In this model, the vertices represent the nodes in the network and there are edges between nodes that are close enough to interfere. An $L(2,1)$-labeling of a graph $G$ is an integer labeling of $G$ in which two vertices with an edge between them must have labels differing by at least $2$, and with a path of length two between them must differ by at least $1$. The goal of the problem is to minimize the span, or the difference between the lowest and highest labels used.\\
In Section \ref{back}, we discuss some other applications of and work on the channel assignment problem, such as Hale's $T$ colorings and scheduling problems. We also discuss generalizations of the $L(2,1)$-labeling problem, such as those in which alternate distances are required to label the transmitters, or those in which specific distances are assigned to edges. In addition, we discuss a conjecture on the relationship between the span of labelings and the maximum degree, or the largest number of edges on any vertex. Many bounds on the span have been proven, but none have reached the $\Delta^2$ conjecture, which claims that any graph $G$ has an $L(2,1)$-labeling with span at most the square of $G$'s maximum degree, as long as the maximum degree.\\
In Section \ref{term}, we review basic graph theory terminology. Among the terms reviewed are concepts such as simple graphs, vertex degree, colorings, paths, hamiltonicity, graph distance, and the square of a graph.\\
In Section \ref{bounds} we visit some previous work on the $L(2,1)$ labeling problem. Previous authors have bounded the span of certain special classes of graphs, such as cycles, paths, and trees. In many cases these bounds are exact or very close. In addition, we discuss the progress of algorithms to find labelings with small span. Several authors have made bounds close to that in the $\Delta^2$ Conjecture, and often the algorithms in these proofs are notable. We will mention some hardness results in this section as well;  the hardness of testing for minimum span is known. 
Section \ref{ham} deals with some results on hamilton paths, or paths through a graph that visit each vertex exactly once, which will be needed in the proofs of our main results. We will discuss some necessary conditions for hamilton paths that depend on the minimum degree of a graph, as well as some that depend on the specific number of edges on each vertex. Here we will also cover how hamilton paths relate to the channel assignment problem. 
Section \ref{main} is concerned with the main results in this paper. In particular, we look at a special case of the weighted graph model which subsumes the $L(2,1)$-labeling problem. We use this to prove that the $\Delta^2$ Conjecture holds if the number of vertices in $G$ is below a certain bound which is cubic in the maximum degree. This technique also improves the current best bound for graphs with slightly larger numbers of vertices. The proof also results in an algorithm  to find labelings with a given size-dependent span. 
In Section \ref{tightnesscomments} we will present some graphs for which the $\Delta^2$ Conjecture is very close, and we find a family of graphs with the highest known minimum span. \\
Finally, Section \ref{future} covers some other questions which have arisen during the course of this research, and some directions the techniques used in this paper could extend. In particular, we examine a version of the $\Delta^2$ conjecture for the weighted graph model, and we ask if similar hamilton cycle techniques apply for a cyclic version of the labeling problem.

\section{Background}\label{back}

The \emph{channel assignment problem} is the determination of assignments of channels (integers) to stations such that those stations close enough to interfere receive distant enough channels. Interference between stations occurs when frequencies are too similar. This is one motivation of classic vertex colorings of graphs; one can form a graph in which vertices are stations and edges are added between stations close enough to interfere. If there is a frequency condition that close stations must obey, then different colors are just integral multiples of this frequency, and a proper coloring corresponds to a valid frequency assignment. Hale \cite{hale} formulated the problem in terms of $T$-colorings, which are integer colorings in which adjacent vertices' colors cannot differ by a member of a set of integers $T$ with $\{0\}\subset T$. This allows the models to take into account certain frequency differences that may be ``bad" in some sense. This problem also applies to scheduling problems; for example, one might want to schedule some set of events $X$ that each consist of two separate one-hour events $x_1$ and $x_2$ that are two hours apart from beginning to beginning. In this case one can form a graph where the vertices are events, and two events that are not allowed to overlap would have an edge between them. In that case, it is enough to assign a certain hour to the time $t_x$ directly between the events $x_1$ and $x_2$. Integer labels are a choice for the hour $t_x$. Then adjacent vertices $x$ and $y$ are labeled correctly if and only if $|t_x - t_y| \notin \{0, 2\}$. Roberts \cite{roberts} proposed a generalization of the channel assignment problem in which closer transmitters would be required to have channels that differed by more than those of the slightly more distant transmitters, adding a condition for non-adjacent vertices as well. The $L(2,1)$-labeling problem was first studied by Griggs and Yeh in 1992 in response to Roberts' proposal. An $L(2,1)$-labeling of a graph $G$ is an integer labeling of $G$ in which two vertices at distance one from each other must have labels differing by at least $2$, and those at distance two must differ by at least $1$. Formally:
\begin{defin} An $L({2,1})$-labeling of a simple graph $G$ is a function $f:V(G) \rightarrow \mathbb{Z}$ such that if $xy \in E(G)$, then $|f(x) - f(y)| \geq 2$ and if the distance between $x$ and $y$ is two then $|f(x) - f(y)| \geq 1$. 
\end{defin}

Note that one may always assume that the labeling starts at zero, because one can subtract a constant from all the labels while preserving the distance conditions.\\

There are natural generalizations of this problem, such as the $L(d_1, d_2)$-labeling problem, in which vertices must be labeled by real numbers so that vertices at distance $1$ differ by at least $d_1$ and those at distance two must differ by $d_2$. For example, the $L(p,1)$-labeling and $L(h, k)$-labeling problems, where $h,k,p \in \mathbb{Z}$, have received particular attention.  Griggs and Yeh actually showed that it suffices to consider labels that are integral multiples of $d$ in the $L(2d, d)$-labeling problem, meaning it is enough to consider $L(2,1)$-labelings \cite{griggs92}.

The goal in each of these problems is to find a labeling with minimum span, or distance between the highest and lowest labels used. This model uses graph distance as a discrete analog of actual distance, but the model does not work perfectly. Say two transmitters are \emph{very close} if they are close enough to require frequencies differing by $2$, and \emph{close} if their frequencies only need to differ by $1$. There could be a scenario, as shown in Figure \ref{transmitter}, in which two transmitters were close but they had no common very close neighbor. Those two vertices would not be at distance two in the corresponding graph, so an $L(2,1)$-labeling might assign them the same label. However, the graph version of the problem can still yield good bounds and heuristics for the realistic labeling problem. In addition, there are some situations in which they are equivalent. If stations are placed at the centers of hexagons that tile the plane in a honeycomb lattice, then the graph formed by connecting those stations in adjacent hexagons is called $\Gamma_\Delta$. This configuration allows the use of few transmitters to cover a large amount of space, and one can see that the graph problem is in fact the same in this case. 

\begin{figure}[h!]
\centering
\begin{tikzpicture}[scale=2,auto=left,node distance = 2cm, every node/.style={circle,draw}]
 \tikzstyle{transmit} = [circle,fill = blue!20, draw];
  	\node[transmit] (n1) at (1, 20) {};
	\node[minimum size = 3cm] (n2) at (n1) {};
	\node[minimum size = 5cm, draw = red] (n2) at (n1) {};

	\node[transmit] (n2) [below = .7cm of n1] {};
	
	\node[transmit] (n3) [right = 1cm of n2] {};
	
	\node[transmit] (n4) [above right = .7cm of n3] {};


\draw	(n1) -- (n2);
\draw	(n2) -- (n3);
\draw	(n3) -- (n4);

 \end{tikzpicture}
\caption{Very close is depicted by a black circle or an edge. Close is the red circle.}\label{transmitter}

\end{figure}
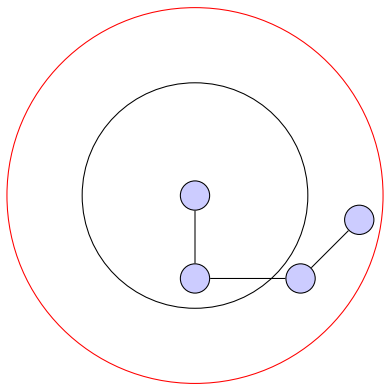

In fact, there are ways to get around the problem in Figure \ref{transmitter}. Instead of using the notion of graph distance, one might construct a graph where very close vertices are connected by red edges and close edges are connected by blue edges. The labeling must assign labels differing by $2$ to vertices connected by red edges and labels differing by $1$ to those connected by blue edges. This ensures that all frequency conditions are obeyed. More generally, one could assign weights $w(x,y)$ to all edges  $xy$ and attempt to label the vertices with a function $\phi: V(G) \rightarrow \{0, 1, ..., t\}$ such that $|\phi(x) - \phi(y)| \geq w(xy)$ for all vertices $x$ and $y$ of $G$. This problem is called the \emph{weighted graph model} of the channel assignment problem, and one can see that the $L(2,1)$-labeling problem is a special case of this problem. The weighted graph model is treated in the survey of McDiarmid \cite{mcdiarmid}, and a real number version is discussed by Griggs and Jin \cite{griggsjin} and Griggs and Kr\'{a}l \cite{griggskral}.

The $L(2,1)$-labeling problem has been studied with the central goal of finding bounds on the minimum span, or $\lambda_{2,1}(G)$. $\lambda_{2,1}(G)$ is the smallest number such that there exists an $L(2,1)$-labeling of $G$ with the difference $\lambda_{2,1}(G)$ between the highest and lowest label. $\lambda_{2,1}(G)$ is sometimes written $\lambda_{2,1}$ if there is no possibility for confusion. One parameter affecting the difficulty of labeling a graph $G$ is its maximum degree $\Delta(G)$. The maximum degree measures how many transmitters are close to one another, so large maximum degree should force labelings with large span. Much of the work on $L(2,1)$-labelings has consisted of attempts to bound $\lambda_{2,1}(G)$ in terms of the maximum degree. This work was galvanized by the early ``$\Delta^2$ Conjecture" of Griggs and Yeh \cite{griggs92}:

\begin{conj} If $\Delta(G) \geq2$, then $\lambda_{2,1} \leq \Delta^2.$\end{conj}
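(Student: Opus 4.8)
The plan is to recast the labeling problem as a structured coloring of the square graph $G^2$ and then to whittle down the slack in the naive greedy bound. First I would record the reformulation: an $L(2,1)$-labeling of span $\lambda$ is exactly an ordered partition of $V(G)$ into classes $C_0, C_1, \ldots, C_\lambda$, where $C_i$ is the set of vertices receiving label $i$, such that each $C_i$ is an independent set in $G^2$ (two equally-labeled vertices must be at distance at least three) and no edge of $G$ joins two consecutive classes $C_i$ and $C_{i+1}$ (adjacent vertices must differ by at least two). Minimizing the span then amounts to producing such a sequence, counting empty classes, as short as possible. Since each vertex has at most $\Delta$ neighbors and at most $\Delta^2 - \Delta$ vertices at distance exactly two, one has $\Delta(G^2) \le \Delta^2$, and a plain first-fit coloring along any vertex order already gives $\lambda_{2,1}(G) \le \Delta^2 + 2\Delta$: each colored $G$-neighbor forbids the three labels $\ell, \ell\pm 1$, and each colored distance-two vertex forbids one, for a total of at most $3\Delta + (\Delta^2 - \Delta)$.

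The real content is removing the additive $2\Delta$. Here I would exploit the Hamilton-path machinery of Section~\ref{ham} rather than coloring in an arbitrary order. The idea is to order the vertices so that consecutive vertices are non-adjacent in $G$ (a Hamilton path, or a small family of paths, in the complement), and then let labels increase along that order; this lets consecutive labels be shared between $G$-non-adjacent vertices and collapses the wasted ``$\pm 1$'' slots contributed by the $2$-separation condition. Coupling this with a careful account of how the distance-two constraints spread across the color classes is the route by which the successive refinements in the literature (Chang and Kuo, and later Gon\c{c}alves) pushed the general bound down to $\Delta^2 + \Delta - 2$, a hair above the conjectured value.

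The hard part — and the reason the statement is posed as a conjecture rather than a theorem — is closing the last gap from $\Delta^2 + \Delta - 2$ down to $\Delta^2$. The greedy and Hamilton-path bookkeeping is essentially tight for the worst local configurations, so the remaining $\Delta - 2$ units cannot be recovered by any purely sequential or degeneracy-based count; one must instead argue that the unavoidable local conflicts cannot all occur at once across the whole graph. For very large $\Delta$ this is achievable by a probabilistic attack: split the labels into a dense block and a sparse reservoir, color most vertices by a random assignment, and repair the few surviving conflicts with the Lov\'{a}sz Local Lemma or a semi-random nibble, which succeeds only once $\Delta$ exceeds an enormous threshold. The genuine obstacle, which remains open, is the regime of small and moderate $\Delta$, where neither the extremal counting nor the probabilistic concentration is strong enough. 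This is precisely why this thesis does not attempt the conjecture in full, but instead establishes the bound exactly under the order restriction developed in Section~\ref{main}.
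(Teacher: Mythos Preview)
Your assessment is correct: the statement is the $\Delta^2$ Conjecture, which the paper explicitly presents as an open problem and does not prove. Like you, the paper surveys the chain of partial results --- the $\Delta^2 + 2\Delta$ first-fit bound, Chang--Kuo's $\Delta^2 + \Delta$, Gon\c{c}alves' $\Delta^2 + \Delta - 2$, and the Havet--Reed--Sereni probabilistic proof for $\Delta$ above roughly $10^{69}$ --- and then settles for the order-restricted version in Section~\ref{main}. So there is no ``paper's own proof'' to compare against, and your concluding paragraph accurately reflects the paper's stance.

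One small correction to your narrative: you credit the Chang--Kuo and Gon\c{c}alves improvements to the Hamilton-path machinery of Section~\ref{ham}, but the paper's reproduction of Chang--Kuo's argument is an iterative scheme that greedily builds maximal independent sets in $G^2$ and counts forbidden classes; Hamilton paths play no role there. The Hamilton-path/color-adjacency-graph approach is the paper's own device, used only for the order-bounded results of Section~\ref{main}, not for the general $\Delta^2 + \Delta$ or $\Delta^2 + \Delta - 2$ bounds.
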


Several efforts have been made towards this bound \cite{griggs92}, \cite{chang96}, \cite{gon2005}, \cite{havet2012griggs}. This conjecture is the main focus of this thesis. 

\section{Terminology}\label{term}

\begin{defin}A \emph{graph} $G$ is a set of vertices $V(G)$ and a set of edges $E(G)$. Each edge is a set of two vertices. The edge consisting of the vertices $x$ and $y$ is denoted $xy$, and if $xy$ is in $E(G)$ then $x$ and $y$ are said to be \emph{adjacent}. 
\end{defin}

\begin{defin}A \emph{simple graph} is a graph with no edges from a vertex to itself, and at most one edge between any two vertices $x$ and $y$. \end{defin}

\noindent Throughout this paper the graphs we discuss are assumed to be simple and finite. 

\begin{defin}The \emph{degree} of  a vertex $v$ in $V(G)$ is the number of vertices adjacent to $v$. The degree of $v$ is denoted $d_{G}(v)$ or just $d(v)$ when the usage is clear from context. The set of vertices adjacent to $v$ is called the \emph{neighborhood} of $v$, denoted $N_G(v)$ or $N(v)$, and its members are called \emph{neighbors} of $v$.\end{defin}

\begin{defin} A \emph{proper coloring} of $G$ is a function $f$ from $V(G)$ to a set $S$ such that adjacent vertices map to different elements of $S$. 
\end{defin}

\begin{defin} A \emph{path} in a graph $G$ is an ordered list of distinct vertices $(v_1, ..., v_n)$ such that $v_i v_{i+1} \in E(G)$ for $1\leq i \leq n-1$. A \emph{cycle} is a path with $v_1 v_n \in E(G)$. \end{defin}

\begin{defin} A \emph{hamilton cycle} is a cycle using all the vertices of $G$.  A \emph{hamilton path} is a path using all the vertices of $G$. $G$ is said to be \emph{hamiltonian} if it contains a hamilton cycle. \end{defin}

\begin{defin} The \emph{distance} between two vertices $x$ and $y$ in $E(G)$, denoted $d_G(x,y)$ or $d(x,y)$, is the minimum length of any path from $x$ to $y$ in $G$. If there is no such path, we say $d(x,y) = \infty$. The \emph{diameter} of a graph is the maximum distance between any two vertices.\end{defin}

\begin{defin} The \emph{square} of a graph $G$, denoted $G^2$, is a graph with $V(G^2) = V(G)$ and $E(G^2) = \{xy\: |\: 0 <d(x,y) \leq 2\}.$ If a graph is diameter 2, then $G^2$ contains edges between all distinct vertices. \end{defin}

\section{Bounds and Results for the $L(2,1)$-labeling Problem}\label{bounds}

There have been many results on the $L(2,1)$-labeling problem, and this is by no means a comprehensive survey. For more information we direct the reader to \cite{yeh06}, \cite{cal06}, \cite{griggskral}, or \cite{griggs92}.

In their original 1992 paper, Griggs and Yeh bounded $\lambda_{2,1}$ for paths, cycles, trees. We will state the theorems here and include the occasional proof for the purpose of exposition. 

\begin{thm*}[Griggs, Yeh \cite{griggs92}]\label{paths} 
Let $P_n$ be a path on $n$ vertices. Then (i) $ \lambda_{2,1}(P_2) = 2$, (ii) $\lambda_{2,1}(P_3) = \lambda_{2,1}(P_4) = 3$, and (iii) $\lambda_{2,1}(P_n) = 4$ for $n \geq 4$.
\end{thm*}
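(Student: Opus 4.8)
The plan is to prove each equality by pairing an explicit small-span labeling (for the upper bound) with a short case analysis (for the lower bound), and to cut down the work using the fact that $\lambda_{2,1}$ is monotone under taking induced subgraphs. Concretely: a contiguous segment of $P_n$ on $m$ vertices is an induced subgraph isomorphic to $P_m$, and restricting any valid $L(2,1)$-labeling of $P_n$ to such a segment is again valid, since in an induced subgraph distances can only increase and hence the distance conditions can only weaken. So $\lambda_{2,1}(P_m) \le \lambda_{2,1}(P_n)$ for $m \le n$, and it is enough to establish the lower bounds $\lambda_{2,1}(P_2) \ge 2$, $\lambda_{2,1}(P_3) \ge 3$, and $\lambda_{2,1}(P_5) \ge 4$, together with the upper bounds $\lambda_{2,1}(P_2) \le 2$, $\lambda_{2,1}(P_4) \le 3$, and $\lambda_{2,1}(P_n) \le 4$ for all $n$ (the value $3$ for $P_4$ coming from (ii), and $4$ from (iii) once $n \ge 5$).

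For the upper bounds I would just display labelings and check the constraints. For $P_2$ use $0,2$; for $P_4$ use the labeling $1,3,0,2$ and verify the three edge constraints ($\ge 2$) and the two distance-two constraints ($\ge 1$). For general $P_n$ I would take the periodic labeling $f(v_i) = g(i \bmod 5)$ whose one period is $0,2,4,1,3$; because the pattern has period $5$, checking that consecutive labels differ by at least $2$ and labels two apart differ by at least $1$ reduces to a finite verification over the five consecutive pairs and five distance-two pairs occurring within two adjacent periods, including the pairs that straddle a period boundary.

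For the lower bounds, $\lambda_{2,1}(P_2) \ge 2$ is immediate from the edge condition. For $P_3 = v_1 v_2 v_3$, suppose the span is $2$, so all labels lie in $\{0,1,2\}$; then $f(v_2)$ must be at distance at least $2$ from both $f(v_1)$ and $f(v_3)$, which forces $f(v_2) \in \{0,2\}$ and then forces both $f(v_1)$ and $f(v_3)$ to equal the opposite endpoint value, contradicting $|f(v_1) - f(v_3)| \ge 1$. For $P_5 = v_1 \cdots v_5$ with span $3$ (labels in $\{0,1,2,3\}$), I would branch on $f(v_3)$: if $f(v_3) \in \{1,2\}$ then both neighbors $v_2, v_4$ are forced to a single common value, contradicting that they are at distance two; if $f(v_3) \in \{0,3\}$ then $\{f(v_2), f(v_4)\}$ is forced to be $\{2,3\}$ (resp. $\{0,1\}$), and in each resulting sub-case one of $v_1, v_5$ is pushed to a value that violates its distance-two constraint with $v_3$. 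That handful of cases finishes the bound.

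The argument is entirely elementary, so there is no genuinely hard step; the only thing requiring care is the bookkeeping in the $P_5$ lower bound — making sure every choice of $f(v_3)$ and every forced assignment of $f(v_2), f(v_4)$ really is driven to a contradiction — and confirming that the wraparound pairs of the period-$5$ pattern also satisfy both distance conditions.
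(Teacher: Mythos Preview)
Your proposal is correct and follows essentially the same approach as the paper: explicit labelings for the upper bounds, subgraph monotonicity to propagate lower bounds, and a short case analysis for the small cases. The only cosmetic differences are your choice of period-$5$ pattern ($0,2,4,1,3$ versus the paper's $0,3,1,4,2$) and that you actually carry out the $P_5$ lower-bound analysis that the paper leaves to the reader.
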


\begin{figure}[h!]
\caption{$L(2,1)$-labelings of $P_2$, $P_3$, and $P_4$.}\label{pathfigure}
\centering
\vspace{1cm}
 \begin{tikzpicture}[scale=.8,auto=left,node distance = 2cm, every node/.style={circle,draw}]
  	\node (n1) at (1, 20) {0};
	\node (n2) [right of =n1] {2};
	\node (n3) [below of =n1] {1};
	\node (n4) [right of =n3] {3};
	\node (n5) [right of =n4] {0};
	\node (n6) [below of =n3] {1};
	\node (n7) [right of =n6] {3};
	\node (n8) [right of =n7] {0};
	\node (n9) [right of =n8] {2};

\draw	(n1) -- (n2);
\draw	(n3) -- (n4);
\draw	(n4) -- (n5);
\draw	(n6) -- (n7);
\draw	(n7) -- (n8);
\draw	(n8) -- (n9);

 \end{tikzpicture}
\end{figure}

\begin{proof}
For $n \leq 4$, appropriate labelings are shown in Figure ~\ref{pathfigure}. It is clear that 2 is the best possible span for $P_2$. Suppose $P_3$ were labelled with span 2; then there must be a vertex labeled 0 and one labeled 2. Then in any case, the remaining vertex cannot be labeled 1 due to the distance conditions. $P_4$ contains $P_3$, so it is also labeled with best possible span. Let $n \geq 5$ and index the vertices in the path, in order, by $i$ for $i \in \{1, ...., n\}$. One can see that the labeling 

\begin{displaymath}
   f(v_i) = \left\{
     \begin{array}{lr}
       0 & \textrm{ if } i \equiv 1 \pmod 5\\
       3 & \textrm{ if } i \equiv 2 \pmod 5\\
       1 & \textrm{ if } i \equiv 3 \pmod 5\\
       4 & \textrm{ if } i \equiv 4 \pmod 5\\
       2 & \textrm{ if } i \equiv 0 \pmod 5\\
       
     \end{array}
   \right.
\end{displaymath} 
suffices. We leave it to the reader to see that $P_5$ cannot be labeled with $\{0,1,2,3\}$.\end{proof}

\begin{figure}[b!]\label{pathfig}
\caption{$L(2,1)$-labelings of $P_2$, $P_3$, and $P_4$.}
\centering
\vspace{1cm}
 \begin{tikzpicture}[scale=.8,auto=left,node distance = 2cm, every node/.style={circle,draw}]
  	\node (n1) at (1, 20) {0};
	\node (n2) [right of =n1] {2};
	\node (n3) [below of =n1] {1};
	\node (n4) [right of =n3] {3};
	\node (n5) [right of =n4] {0};
	\node (n6) [below of =n3] {1};
	\node (n7) [right of =n6] {3};
	\node (n8) [right of =n7] {0};
	\node (n9) [right of =n8] {2};

\draw	(n1) -- (n2);
\draw	(n3) -- (n4);
\draw	(n4) -- (n5);
\draw	(n6) -- (n7);
\draw	(n7) -- (n8);
\draw	(n8) -- (n9);

 \end{tikzpicture}
\end{figure}

\begin{thm}[Griggs, Yeh \cite{griggs92}]\label{cycles} Let $C_n$ be a cycle on $n$ vertices. Then $\lambda_{2,1}(C_n) = 4$.
\end{thm}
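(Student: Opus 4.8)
The plan is to prove the two inequalities $\lambda_{2,1}(C_n)\ge 4$ and $\lambda_{2,1}(C_n)\le 4$ separately, for every $n\ge 3$.

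For the lower bound I would argue by contradiction: suppose $C_n$ has an $L(2,1)$-labeling $f$ of span at most $3$, so after translating we may take all labels in $\{0,1,2,3\}$. The case $n=3$ is immediate, since $C_3=K_3$ would force three pairwise-adjacent vertices to get labels that are pairwise at distance $\ge 2$, impossible inside an interval of length $3$. For $n\ge 4$ the key observation is that the two neighbors $v_{i-1},v_{i+1}$ of any vertex $v_i$ are \emph{distinct} vertices lying at distance exactly $2$ from each other (a cycle has no chords), so they must receive distinct labels, each differing from $f(v_i)$ by at least $2$. If some vertex had label $1$, both its neighbors would be forced to the unique label of $\{0,1,2,3\}$ at distance $\ge 2$ from $1$, namely $3$, contradicting distinctness; by the symmetry $x\mapsto 3-x$ no vertex has label $2$ either. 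Hence only labels $0$ and $3$ occur, adjacency forces the pattern $0,3,0,3,\dots$ around the cycle, and then $f(v_{i-1})=f(v_{i+1})$ for every $i$, again violating the distance-$2$ condition.

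For the upper bound I would exhibit explicit labelings using only $\{0,1,2,3,4\}$, assembled from two periodic \emph{blocks} $B_3=(0,2,4)$ and $B_5=(0,2,4,1,3)$. One checks directly that every cyclic word obtained by concatenating copies of $B_3$ and $B_5$ in any order is a valid $L(2,1)$-labeling of the corresponding cycle: within each block consecutive labels differ by at least $2$ and labels two apart are distinct, and at every seam the preceding block ends $\dots,2,4$ or $\dots,1,3$ while the next begins $0,2,\dots$, so the seam adjacency ($4$–$0$ or $3$–$0$) and the two seam-crossing distance-$2$ pairs ($\{2,0\},\{4,2\}$ or $\{1,0\},\{3,2\}$) all check out; the same covers the wrap-around seam. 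Since $|B_3|=3$, $|B_5|=5$ and $\gcd(3,5)=1$, the Frobenius number of $\{3,5\}$ is $7$, so every $n\ge 3$ except $n=4$ and $n=7$ is $3a+5b$ with $a,b\ge 0$ (and then automatically $a+b\ge 1$), yielding a labeling of $C_n$. For the two exceptional lengths I would exhibit $(0,3,1,4)$ for $C_4$ and $(0,2,4,0,3,1,4)$ for $C_7$ and verify them by hand.

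The routine but slightly delicate part is the seam bookkeeping in the upper bound — in particular checking that all distance-$2$ constraints straddling a block boundary (and the cyclic wrap-around) are accounted for — together with not overlooking the two non-representable lengths $n=4,7$; once those are handled, each remaining verification is a short direct computation.
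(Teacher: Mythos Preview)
Your proof is correct. Note, however, that the paper does not actually supply a proof of this theorem: it is one of the results merely quoted from Griggs and Yeh (the paper says it will ``include the occasional proof for the purpose of exposition,'' and this one is stated without proof). So there is no in-paper argument to compare against.

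On the merits: your lower bound is clean and self-contained. For $n\ge 5$ one could alternatively shortcut by noting that $C_n$ contains an induced $P_5$ whose distance-$2$ pairs are also distance-$2$ pairs in $C_n$, whence $\lambda_{2,1}(C_n)\ge\lambda_{2,1}(P_5)=4$ by the preceding paths theorem; but your direct argument ruling out the labels $1$ and $2$ is just as good and avoids that dependency. For the upper bound, the $B_3/B_5$ block assembly is sound, and the seam check is complete because both blocks begin with $0,2$, so only the two tail patterns $\dots,2,4$ and $\dots,1,3$ need to be matched against a single head. Your handling of the Frobenius gap at $n=4,7$ via explicit labelings is correct (both check out). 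This is in the same spirit as the construction in the original Griggs--Yeh paper, which likewise builds periodic patterns and treats a few residual lengths by hand.
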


Note that the above two theorems imply that the $\Delta^2$ Conjecture holds for $\Delta = 2$. The next theorem concerns special graphs called trees. A \emph{tree} is a graph with no cycles. Family trees and decision trees are two examples.

\begin{thm*}[Griggs, Yeh \cite{griggs92}]\label{trees} Let $T$ be a tree. Then $\lambda_{2,1}(T)$ is either $\Delta + 1$ or $\Delta + 2$. 
\end{thm*}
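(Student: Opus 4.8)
The plan is to establish the two bounds $\lambda_{2,1}(T)\ge \Delta+1$ and $\lambda_{2,1}(T)\le \Delta+2$ separately; together they give the statement. (It is worth remarking that both values genuinely occur — the star $K_{1,\Delta}$ attains $\Delta+1$ — but the theorem as stated requires only the two inequalities, so I would not dwell on a full characterization.)

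\textbf{Lower bound.} Fix a vertex $v$ with $d(v)=\Delta$. The $\Delta+1$ vertices of $\{v\}\cup N(v)$ are pairwise at distance at most $2$, so they receive $\Delta+1$ distinct labels; a span of $\Delta$ would therefore force their labels to be exactly $\{0,1,\dots,\Delta\}$. But if $f(v)=c$, the edge condition forbids each neighbor from using $c-1$ or $c+1$, and since $0\le c\le \Delta$ and $\Delta\ge 1$, at least one of $c-1,c+1$ lies in $\{0,\dots,\Delta\}$; hence the $\Delta$ neighbors cannot cover all of $\{0,\dots,\Delta\}\setminus\{c\}$, a contradiction. So $\lambda_{2,1}(T)\ge \Delta+1$.

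\textbf{Upper bound.} Root $T$ at an arbitrary vertex $r$ and label the vertices greedily from the palette $\{0,1,\dots,\Delta+2\}$, processing them in order of nondecreasing depth. Label $r$ arbitrarily. For a non-root vertex $v$ with parent $u$, the only already-labeled vertices within distance $2$ of $v$ are $u$ itself (distance $1$), the parent $w$ of $u$ when it exists (distance $2$), and those children of $u$ other than $v$ that precede $v$ in the order (distance $2$); every child or grandchild of $v$ is labeled later, and all other vertices of $T$ are at distance at least $3$ from $v$. The constraint with $u$ rules out the three labels $f(u)-1,f(u),f(u)+1$; since $|f(w)-f(u)|\ge 2$, the value $f(w)$ is one further forbidden label; and each already-labeled sibling forbids one more. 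As $u$ has at most $\Delta-1$ children when $w$ exists (one of $u$'s edges goes to $w$) and at most $\Delta$ children when $u=r$, the number of forbidden labels is at most $3+1+(\Delta-2)=\Delta+2$ in the first case and at most $3+0+(\Delta-1)=\Delta+2$ in the second — in both cases strictly fewer than the $\Delta+3$ available labels. So a valid label always remains, the procedure succeeds, and the resulting labeling has span at most $\Delta+2$.

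\textbf{Main obstacle.} The delicate point lies entirely in the upper bound: one must correctly identify exactly which already-labeled vertices constrain $v$ (parent, grandparent, earlier siblings — and nothing else), verifying that descendants of $v$ impose no constraint at this step, and one must treat the root and its children as a separate case since there the grandparent is absent. Once this bookkeeping is pinned down the argument is just a short count; it uses nothing about trees beyond the fact that each non-root vertex has a unique parent on a shortest path to $r$, and it incidentally yields a linear-time labeling algorithm.
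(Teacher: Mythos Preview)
The paper does not actually supply a proof of this theorem --- it is merely quoted from \cite{griggs92} as background in the survey section, with no argument given. Your proof is correct and is in fact the standard one (essentially that of the original Griggs--Yeh paper): the lower bound by analyzing a maximum-degree vertex together with its neighbors, and the upper bound by a greedy labeling in BFS order from an arbitrary root, counting that at most $\Delta+2$ of the $\Delta+3$ available labels are ever forbidden.
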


In fact, it is so difficult to tell whether $\lambda_{2,1}(T)$ is $\Delta + 1$ or $\Delta + 2$ that Griggs and Yeh conjectured that this decision problem is NP-Complete \cite{griggs92}. However, in 1995, Chang and Kuo showed that this not the case if $P\neq NP$.

\begin{thm*}[Chang, Kuo \cite{chang96}]
Let $T$ be a tree. There is a polynomial algorithm to decide if $\lambda_{2,1}(T) = \Delta +1$.

\end{thm*}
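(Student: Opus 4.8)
The plan is a bottom-up dynamic program on a rooted copy of $T$, with a bipartite matching used to glue together the subtrees at each vertex. Since the Griggs--Yeh theorem for trees quoted above guarantees $\lambda_{2,1}(T)\in\{\Delta+1,\Delta+2\}$, it suffices to decide whether $T$ has an $L(2,1)$-labeling of span at most $\Delta+1$, i.e.\ one using labels from $\{0,1,\dots,\Delta+1\}$. Also note $\lambda_{2,1}(T)\ge\Delta+1$ always: a vertex of degree $\Delta$ together with its $\Delta$ neighbours needs $\Delta+2$ pairwise distinct labels (the centre differs from each neighbour by at least $2$, the neighbours are pairwise at distance two), so the algorithm only has to return ``$\le\Delta+1$'' or ``$=\Delta+2$''.

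First I would root $T$ at an arbitrary vertex $r$. For a non-root vertex $v$ with parent $p(v)$ and labels $a,b\in\{0,\dots,\Delta+1\}$, define a boolean $F(v,a,b)$ that is true exactly when the subtree $T_v$ hanging below $v$ admits an $L(2,1)$-labeling inside $\{0,\dots,\Delta+1\}$ with $v$ receiving $a$ and $p(v)$ receiving $b$; for the root use the two-argument variant $F(r,a)$. The base case is a leaf $v$, where $F(v,a,b)$ holds iff $|a-b|\ge 2$ (all other constraints on a leaf are imposed one level up). For the recursion, let $v$ have children $c_1,\dots,c_k$. In any valid labeling consistent with $v\mapsto a$ and $p(v)\mapsto b$, each $c_i$ gets a label $a_i$ with $|a_i-a|\ge 2$, with $a_i\ne b$ (distance two from $p(v)$), and with the $a_i$ pairwise distinct (siblings are at distance two); conversely, once such $a_1,\dots,a_k$ are fixed the subtrees $T_{c_i}$ may be labeled independently. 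Hence $F(v,a,b)$ is true iff a system of distinct representatives exists: build the bipartite graph on $\{c_1,\dots,c_k\}$ against the label set $\{0,\dots,\Delta+1\}\setminus(\{a-1,a,a+1\}\cup\{b\})$, with an edge $c_i\ell$ whenever $F(c_i,\ell,a)$ is already known to be true, and test for a matching saturating all the $c_i$ (Hopcroft--Karp, say). Processing the vertices in postorder fills in every $F$ value, and $\lambda_{2,1}(T)\le\Delta+1$ iff $F(r,a)$ holds for some $a$.

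For the running time there are $O(n\Delta^2)$ triples $(v,a,b)$, and each is evaluated by one bipartite matching on a graph with $O(\Delta)$ vertices and $O(\Delta^2)$ edges; since $\Delta<n$ the whole procedure is polynomial in $n$. The one place the tree structure is genuinely used — and the step I expect to be the main obstacle — is the decoupling claim inside the recursion: that once the labels of $v$, of $p(v)$, and of the children $c_1,\dots,c_k$ are fixed, the child-subtrees become mutually independent instances. I would justify this by noting that any two vertices lying in distinct subtrees $T_{c_i},T_{c_j}$, other than the $c_i$ themselves, are at distance at least $3$ in $T$, so no $L(2,1)$-condition links them; the remaining work (the sibling-distinctness and the $a_i\ne b$ conditions) is exactly what the matching enforces, and everything else is bookkeeping. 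A secondary point to verify is that the parent's label $b$ need only be carried one level down — grandchildren of $v$ are already at distance $3$ from $p(v)$ — which is what keeps the state to three coordinates and the algorithm polynomial.
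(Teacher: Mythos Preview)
The paper does not actually prove this theorem; it is stated with a citation to Chang and Kuo and then the exposition moves on, so there is no proof in the paper to compare against.

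Your proposal is correct, and it is essentially the original Chang--Kuo algorithm: root the tree, carry as state the pair $(\text{label of }v,\text{label of }p(v))$, and at each internal vertex reduce the consistency check for the children to a bipartite matching (a system of distinct representatives among the feasible child labels). Your decoupling justification is the right one---in a tree, any two vertices in distinct child-subtrees $T_{c_i},T_{c_j}$, other than $c_i,c_j$ themselves, are at distance at least $4$, and any vertex in $T_{c_i}\setminus\{c_i\}$ is at distance at least $3$ from $p(v)$ and from every $c_j$ with $j\ne i$---so fixing the labels at $v$, $p(v)$, and the $c_i$ really does make the subproblems independent. The observation that the state need only remember the parent's label (grandchildren of $v$ are already at distance $3$ from $p(v)$) is precisely what keeps the table size $O(n\Delta^2)$. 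One cosmetic point: for a non-leaf $v$ your recursion never explicitly enforces $|a-b|\ge 2$ as the leaf base case does; this is harmless because the recursion from above only ever queries $F(v,a,b)$ with $|a-b|\ge 2$, but it would be cleaner to include the check in the definition of $F$ uniformly.
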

In addition, Griggs and Yeh proved that it is NP-complete to decide if a graph $G$ has $\lambda_{2,1}(G) \leq |V(G)|$ \cite{griggs92}. They conjectured that deciding if $\lambda_{2,1}(G) \leq k$ is $NP$-complete in general. Indeed, this was confirmed in 2001 with the following result.

\begin{thm*}[Fiala, Kloks, Kratchov\'{i}l \cite{fiala01}]
For each $\lambda\geq4$, it is NP-complete to decide if the input graph has $\lambda_{2,1}(G) \leq \lambda$.
\end{thm*}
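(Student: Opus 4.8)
Containment in NP is routine: a labeling $f:V(G)\to\{0,1,\ldots,\lambda\}$ has size polynomial in $|V(G)|$ since $\lambda$ is fixed, and one verifies in polynomial time that $|f(x)-f(y)|\ge 2$ on every edge and that $f(x)\ne f(y)$ whenever $x$ and $y$ have a common neighbour --- together these conditions are exactly the distance-one and distance-two requirements. The content is the hardness reduction.

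I would reduce, for each fixed $\lambda\ge 4$, from an NP-complete variant of satisfiability --- say \emph{not-all-equal $3$-satisfiability} --- in polynomial time. The engine is a forcing gadget: \emph{any vertex $v$ of degree exactly $\lambda-1$ must receive label $0$ or $\lambda$.} Indeed the $\lambda-1$ neighbours of $v$ pairwise lie at distance at most two through $v$, hence receive pairwise distinct labels, and each differs from $f(v)$ by at least $2$; but if $1\le f(v)\le\lambda-1$ this leaves only $(\lambda+1)-3=\lambda-2$ admissible labels for $\lambda-1$ neighbours, impossible. Moreover the set of neighbour-labels is then pinned down --- it is $\{2,3,\ldots,\lambda\}$ when $f(v)=0$ and $\{0,1,\ldots,\lambda-2\}$ when $f(v)=\lambda$. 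So a degree-$(\lambda-1)$ vertex is a Boolean register ($0$ meaning \emph{false} and $\lambda$ meaning \emph{true}), and since $i\mapsto\lambda-i$ is a symmetry of everything in sight, not-all-equal satisfiability (rather than ordinary satisfiability) is the natural source problem.

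The construction then installs one such register per variable, and per clause attaches a clause gadget to the (private ``port'' neighbours of the) three relevant registers, designed so that it admits a span-$\le\lambda$ labeling exactly when those three registers are not all equal. Correctness splits the usual way: a satisfying not-all-equal assignment propagates to a full labeling by filling the gadgets locally, and any span-$\le\lambda$ labeling reads a satisfying assignment off the registers. For $\lambda>4$ one either reuses the $\lambda$-parametrised gadgets or appends a fixed sub-gadget that absorbs the surplus labels without affecting satisfiability; the bound $\lambda\ge 4$ is genuine, since for $\lambda\le 3$ one checks directly that $\lambda_{2,1}(G)\le\lambda$ forces $\Delta(G)$ small and is therefore decidable in polynomial time.

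The main obstacle is designing and verifying these gadgets, and the difficulty is precisely what separates $L(2,1)$-labeling from graph colouring: the constraints reach \emph{non-adjacent} vertices, so an edge inserted anywhere can create new distance-two constraints that quietly wreck a register or clause gadget elsewhere. The real work is thus a modular design in which distinct gadgets interact only through their port vertices, each register keeps degree exactly $\lambda-1$ after the whole graph is assembled, and one proves a composition lemma to the effect that the valid global labelings are exactly the independent valid local labelings of the gadgets, glued along the ports.
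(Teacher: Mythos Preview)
The paper does not give its own proof of this theorem: it is stated purely as a cited result of Fiala, Kloks and Kratochv\'{i}l, and the text moves on immediately to the complementary discussion of why the case $\lambda\le 3$ is polynomial. There is therefore no in-paper argument to compare your approach against.

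As for the sketch itself: your forcing observation about degree-$(\lambda-1)$ vertices is correct and is a natural engine for such a reduction, and the symmetry $i\mapsto\lambda-i$ does make not-all-equal satisfiability a sensible source problem. But what you have written is, by your own labeling, a proof \emph{idea} rather than a proof. The clause gadget, the handling of negation, the guarantee that registers retain degree exactly $\lambda-1$ once all gadgets are attached, and the composition lemma are precisely where the technical content lives, and none of it is supplied. If this were intended to stand in for the cited proof rather than to point at it, those constructions would need to be written out and verified for every $\lambda\ge 4$.
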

For $\lambda \leq 3$, first check if $G$ contains a cycle. If it does, then $\lambda_{2,1}(G) \geq 4$ by Theorem \ref{cycles}. It is known that this check can be done in polynomial time \cite{tucker1984}. If $G$ does not contain a cycle, then $G$ is a tree, so $\lambda_{2,1}(G)$ can be determined in polynomial time by the theorem of Chang and Kuo. In any case, the decision problem is polynomial for $\lambda \leq 3$. 

Now we review some bounds on $\lambda_{2,1}$ for general graphs. The original bound in 1992 was obtained by first fit labeling techniques.
\begin{thm*}[Griggs, Yeh \cite{griggs92}]\label{firstbound}
Let $G$ be a graph. Then $\lambda_{2,1}(G) \leq \Delta^2+ 2\Delta$.
\end{thm*}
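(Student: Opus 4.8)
The plan is to use a greedy, or \emph{first-fit}, argument on the label set $\{0, 1, \ldots, \Delta^2 + 2\Delta\}$. I would fix an arbitrary ordering $v_1, v_2, \ldots, v_n$ of $V(G)$ and process the vertices one at a time, assigning to $v_i$ the least label in $\{0, 1, \ldots, \Delta^2 + 2\Delta\}$ that is compatible, with respect to the $L(2,1)$ distance conditions, with every vertex among $v_1, \ldots, v_{i-1}$ that has already been labeled. If this procedure never gets stuck, the resulting function is by construction an $L(2,1)$-labeling, and every label lies in $\{0, \ldots, \Delta^2 + 2\Delta\}$, so its span is at most $\Delta^2 + 2\Delta$. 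Thus the whole proof reduces to showing that, at each step, the set of labels \emph{forbidden} for $v_i$ has size strictly less than $\Delta^2 + 2\Delta + 1$, the number of available labels.

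To bound the forbidden set for a fixed $v_i$, I would split the already-labeled vertices that constrain it into those at distance $1$ from $v_i$ and those at distance exactly $2$. Since $d(v_i) \le \Delta$, there are at most $\Delta$ neighbors; because adjacent vertices must have labels differing by at least $2$, each already-labeled neighbor $u$ rules out only the three labels $f(u)-1,\ f(u),\ f(u)+1$, contributing at most $3\Delta$ forbidden labels in total. Each neighbor of $v_i$ has at most $\Delta - 1$ neighbors other than $v_i$, so there are at most $\Delta(\Delta - 1) = \Delta^2 - \Delta$ vertices at distance $2$ from $v_i$ (an overcount, since such a vertex may be reachable through several neighbors), and each already-labeled such vertex $w$ rules out only the single label $f(w)$. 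Adding the two contributions, the number of forbidden labels is at most $3\Delta + (\Delta^2 - \Delta) = \Delta^2 + 2\Delta$.

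Since $|\{0, 1, \ldots, \Delta^2 + 2\Delta\}| = \Delta^2 + 2\Delta + 1 > \Delta^2 + 2\Delta$, at least one admissible label always remains, so the greedy procedure terminates successfully, proving $\lambda_{2,1}(G) \le \Delta^2 + 2\Delta$. There is no serious obstacle here: the argument is a clean pigeonhole estimate, and the only points requiring a little care are that the distance-$1$ and distance-$2$ vertex sets are disjoint, that overlaps among the forbidden sets only help, and that the distance-$2$ bound $\Delta(\Delta-1)$ is merely an upper estimate. The genuinely interesting question — pushing the bound down to $\Delta^2 + \Delta$ in the style of Chang and Kuo — would instead require choosing the vertex order cleverly and accounting more tightly for how neighbors' forbidden intervals interact, but that refinement is not needed for the statement at hand.
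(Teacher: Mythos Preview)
Your argument is correct and is exactly the first-fit labeling technique the paper alludes to; the paper itself only states the theorem and attributes it to this method without spelling out the proof. Your count of at most $3\Delta + \Delta(\Delta-1) = \Delta^2 + 2\Delta$ forbidden labels against $\Delta^2 + 2\Delta + 1$ available ones is the standard realization of that idea.
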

The next improvement on the general bound, proven in 1995, employed an algorithm which was used in the proof of the subsequent bound as well. 

\begin{thm*}[Chang, Kuo \cite{chang96}]\label{changbound}
Let $G$ be a graph. Then $\lambda_{2,1}(G) \leq \Delta^2+ \Delta$.
\end{thm*}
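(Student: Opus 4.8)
The plan is to mimic the greedy idea behind the first-fit bound $\Delta^2 + 2\Delta$ of Griggs and Yeh, but to assign labels \emph{in increasing order of the label} rather than to process the vertices one at a time. Concretely, I would construct the labeling in stages $k = 0, 1, 2, \dots$. Entering stage $k$, let $U$ be the set of vertices not yet labeled, and let $A_k$ consist of those vertices in $U$ having no neighbor that already carries the label $k-1$ (set $A_0 = U$). Pick $I_k \subseteq A_k$ to be a \emph{maximal} set that is independent in $G^2$ --- that is, no two vertices of $I_k$ are at distance $1$ or $2$ in $G$ --- and give every vertex of $I_k$ the label $k$. Repeat until $U = \varnothing$.

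The first thing to check is that this produces a genuine $L(2,1)$-labeling $f$. Suppose $xy \in E(G)$. Since $x$ and $y$ are joined in $G^2$, they never lie in a common $I_k$, so $f(x) \ne f(y)$; and if $\{f(x), f(y)\} = \{m, m+1\}$, say $f(x) = m$ and $f(y) = m+1$, then at stage $m+1$ the vertex $x$ was already labeled $m$, so its neighbor $y$ would not belong to $A_{m+1}$, a contradiction. Hence $|f(x) - f(y)| \geq 2$. If instead $x$ and $y$ are at distance $2$, they are again joined in $G^2$, so $f(x) \ne f(y)$, i.e. $|f(x) - f(y)| \geq 1$.

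Next I would bound the span. Fix a vertex $v$ and suppose for contradiction that $v$ receives none of the labels $0, 1, \dots, \Delta^2 + \Delta$. Then at each of these $\Delta^2 + \Delta + 1$ stages $v$ is unlabeled yet is not placed in $I_k$, so by maximality of $I_k$ one of the following holds: (a) $v \notin A_k$, meaning $v$ has a neighbor labeled $k-1$; or (b) $v \in A_k$ but $v$ has a $G^2$-neighbor in $I_k$, meaning some vertex within distance $2$ of $v$ carries the label $k$. Case (a) can occur for at most $\Delta$ values of $k$, since $v$ has at most $\Delta$ neighbors and each carries at most one label. Case (b) can occur for at most $\Delta + \Delta(\Delta - 1) = \Delta^2$ values of $k$, since that is the maximum number of vertices within distance $2$ of $v$, again each carrying at most one label. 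So at most $\Delta^2 + \Delta$ of the stages can block $v$, contradicting the assumption that all $\Delta^2 + \Delta + 1$ of them did. Therefore every vertex is assigned a label from $\{0, 1, \dots, \Delta^2 + \Delta\}$, and $\lambda_{2,1}(G) \leq \Delta^2 + \Delta$.

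The heart of the matter, and the step I expect to be the genuine obstacle for anyone who has not seen the trick, is the choice of greedy order together with the correct local rule. Running first-fit over the vertices charges each previously labeled neighbor with three forbidden labels ($\ell - 1, \ell, \ell + 1$), which is exactly why that argument only reaches $\Delta^2 + 2\Delta$. Sweeping through the labels in increasing order instead lets us postpone every ``$+1$'' conflict --- a clash with label $k+1$ is automatically prevented once stage $k+1$ is handled --- so at stage $k$ we pay only for the ``$-1$'' conflicts, through the set $A_k$, recovering the missing $\Delta$. Insisting that $I_k$ be maximal (not merely independent) is what makes the double count in the previous paragraph close, and it also shows the whole construction runs in polynomial time, since at each stage one only needs to greedily extend an independent set in $G^2$.
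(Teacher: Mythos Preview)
Your proof is correct and is essentially the same as the paper's: both run Chang and Kuo's stage-by-stage algorithm, choosing at each label $k$ a maximal $G^2$-independent set among the unlabeled vertices with no neighbor labeled $k-1$, and then bound the span by counting, for a fixed vertex $v$, how many stages can exclude it. Your sets $A_k$ and $I_k$ are exactly the paper's $F_{k-1}$ and $S_k$, and your split into cases (a) and (b) with counts $\Delta$ and $\Delta^2$ is a slight repackaging of the paper's count $2d_G(v) + |\{u : d(u,v)=2\}| \le 2\Delta + \Delta(\Delta-1)$.
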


Before we begin the proof of the theorem, we need a definition and a fact. 
\begin{def}\label{indep}
A subset $S$ of a graph $G$ is \emph{independent} if no two vertices of $S$ are adjacent. 

\end{def}

\begin{fact}\label{degree} $\Delta(G^2) \leq \Delta(G)^2$. \end{fact}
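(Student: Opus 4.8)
The plan is to bound the degree of an arbitrary vertex $v$ in $G^2$ by splitting $N_{G^2}(v)$ according to the distance in $G$. By the definition of the square, a vertex $u$ lies in $N_{G^2}(v)$ precisely when $d_G(u,v) \in \{1,2\}$, so I would count the vertices at distance exactly $1$ and those at distance exactly $2$ separately and add the two bounds.

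First, the vertices at distance $1$ from $v$ are exactly the members of $N_G(v)$, and there are $d_G(v) \leq \Delta$ of them. Second, any vertex $u$ with $d_G(u,v) = 2$ shares a neighbor with $v$, i.e.\ $u \in N_G(w)$ for some $w \in N_G(v)$. For each fixed such $w$ we have $|N_G(w)| \leq \Delta$, and since $v$ itself is one of the neighbors of $w$, at most $\Delta - 1$ of them can play the role of $u$. Ranging over the at most $\Delta$ choices of $w$, we get at most $\Delta(\Delta - 1)$ vertices at distance exactly $2$. Overlaps among the sets $N_G(w) \setminus \{v\}$, or with $N_G(v)$ itself, only shrink this count, which is harmless since we only want an upper bound, so no inclusion--exclusion is needed.

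Adding the two estimates gives $d_{G^2}(v) \leq \Delta + \Delta(\Delta - 1) = \Delta^2$, and since $v$ was arbitrary this yields $\Delta(G^2) \leq \Delta(G)^2$. I do not expect any real obstacle here; the only points requiring care are excluding $v$ from the second-neighborhood count and noting that over-counting is permissible, so the argument is a short direct estimate rather than anything delicate.
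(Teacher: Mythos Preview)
Your argument is correct and is essentially identical to the paper's proof: both fix a vertex $v$, bound the number of distance-$1$ neighbors by $\Delta$ and the number of distance-$2$ vertices by $\Delta(\Delta-1)$ via the intermediate neighbor $w$, and sum to $\Delta^2$. The only difference is that you are slightly more explicit about why over-counting is harmless.
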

\begin{proof}
Let $v \in G$. $v$ has at most $\Delta$ neighbors, and each of those have at most $\Delta - 1$ neighbors other than $v$. Hence, the total number of vertices at distance $2$ from $v$ is at most $\Delta + \Delta(\Delta - 1) = \Delta^2$.
\end{proof}

\begin{proof}[Proof of Chang and Kuo's bound]

The labeling scheme is iterative. Let $S_{-1} = \emptyset$. For $i \geq 0$, if a set $S_i$ is determined, label all its vertices with its index $i$. Let $F_i$ be the set of \emph{unlabeled} vertices at distance at least two from any vertices in $S_i$. Let $S_{i+1}$ be a maximal subset of $F_i$ that is independent in $G^2$, meaning one cannot add any vertices to $S_{i+1}$ and preserve independence in $G^2$. Then the vertices of $S_{i+1}$ will be labeled with $i+1$, and so on. Continue this process until all vertices are labeled.\\

This process must terminate, as if not all vertices are labeled and $S_i = \emptyset$, then $S_{i+1} \neq \emptyset$ because $F_i \neq \emptyset$. Now suppose that $v$ is a vertex with the maximum label $k$. One must wonder why $v$ was not put in the class $S_i$ for $0 \leq i < k$. The two possible reasons are that $v$ is adjacent to a vertex in $S_{i-1}\cup S_{i }$, or that $v$ is at distance $2$ from a vertex in $S_i$. The largest number of classes forbidden to $v$ due to these reasons is $$2d_G(v) + (\textrm{The number of vertices at distance two from } v).$$
From the proof of Fact \ref{degree}, this number is less than $2\Delta + \Delta(\Delta -1) = \Delta^2 + \Delta$. This means there are at most $\Delta^2 + \Delta + 1$ classes $S_0, ..., S_k$, so $k$ is at most $\Delta^2 + \Delta$. \end{proof}

The bound was further improved by Gon\c{c}alves in 2005, using a modified version of Chang and Kuo's algorithm. 

\begin{thm*}[Gon\c{c}alves \cite{gon2005}]
Let $G$ be a graph. Then $\lambda_{2,1}(G) \leq \Delta^2+ \Delta-2$.
\end{thm*}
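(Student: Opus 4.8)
The plan is to refine the iterative independent-set labeling of Chang and Kuo. Their bound rests on one counting step: if $v$ receives the largest label $\lambda$, then every label in $\{0,1,\dots,\lambda-1\}$ is unavailable to $v$, a neighbor $u$ of $v$ ruling out the two consecutive labels $f(u)$ and $f(u)+1$ (adjacency in $G^2$, and $v\notin F_{f(u)}$), while a vertex $w$ at distance exactly $2$ from $v$ rules out only the single label $f(w)$. Writing $s_2(v)$ for the number of vertices at distance exactly $2$ from $v$, this gives $\lambda\le 2\,d_G(v)+s_2(v)$, and $d_G(v)\le\Delta$ together with $s_2(v)\le\Delta(\Delta-1)$ recovers $\Delta^2+\Delta$. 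The improvement has to come from squeezing two extra units out of this estimate, so I would run the Chang--Kuo scheme and then analyze the maximum-label vertex more carefully, exploiting the freedom left in the choice of the sets $S_i$.

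First I would dispose of the slack cases. If $d_G(v)\le\Delta-1$, then since each neighbor of $v$ contributes at most $\Delta-1$ vertices at distance $2$, we get $\lambda\le d_G(v)(\Delta+1)\le\Delta^2-1$, which already beats $\Delta^2+\Delta-2$ for all $\Delta\ge1$; so assume $d_G(v)=\Delta$. If in addition $s_2(v)\le\Delta(\Delta-1)-2$, the same count already gives the claim; and if the forbidden labels overlap at all --- two neighbors $u,u'$ of $v$ with $f(u')=f(u)+1$, or a distance-$2$ vertex sharing a label with some $f(u)$ or $f(u)+1$ --- that again frees up what is needed. Hence I may assume the configuration around $v$ is \emph{maximally rigid}: $N(v)$ is independent, the sets $N(u)\setminus\{v\}$ for $u\in N(v)$ are pairwise disjoint and disjoint from $N(v)$ (so each such $u$ also has degree $\Delta$), the neighbor labels $f(u_1)<\dots<f(u_\Delta)$ satisfy $f(u_{j+1})\ge f(u_j)+2$, and these $2\Delta$ labels together with the $\Delta(\Delta-1)$ labels of the distance-$2$ vertices are essentially all distinct and cover almost all of $\{0,\dots,\lambda-1\}$ with $\lambda\ge\Delta^2+\Delta-1$.

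The remaining task --- and the main obstacle --- is to wring two labels out of this rigid regime. I would attack it by steering the choices the Chang--Kuo algorithm leaves open, most usefully the maximal $G^2$-independent sets $S_0$ and $S_1$, so as to force a collision among the forbidden labels of $v$, and, where steering does not suffice, by a bounded local recoloring of $v$ together with its neighbors that uses two fewer labels. The relevant leverage is that no neighbor of $v$ can carry the label $\lambda-1$ (else $v\notin F_{\lambda-1}$), and that every label below $f(u_1)$ is forbidden to $v$ only through its distance-$2$ vertices, which constrains how the small labels are distributed across the second neighborhood. The genuine difficulty is that the Chang--Kuo estimate is tight slot by slot, so the saving cannot be attributed to any single vertex: it has to come from a mildly global argument about the low labels occurring at distance $2$ from $v$, together with a canonical rule for the set choices that guarantees the reorganization terminates. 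Passing from a saving of one to a saving of two is the last hurdle.
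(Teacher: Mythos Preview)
The paper does not actually prove this theorem. It is quoted as a result of Gon\c{c}alves with only the one-line remark that the proof ``employed a modified version of Chang and Kuo's algorithm''; no argument is given. So there is nothing in the paper to compare your proposal against beyond that single sentence of attribution.

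Judged on its own, your proposal is a reasonable outline of where the improvement must come from, and the reduction to the ``maximally rigid'' configuration around a maximum-label vertex is clean and correct. But the proposal is not a proof: you yourself flag the remaining case as ``the main obstacle'' and describe only an intention (``steering the choices \ldots\ most usefully the maximal $G^2$-independent sets $S_0$ and $S_1$'', ``a bounded local recoloring'') rather than an argument. The final sentence --- ``Passing from a saving of one to a saving of two is the last hurdle'' --- is an admission that the hurdle has not been cleared. That is precisely the content of Gon\c{c}alves' paper, and it is not short: the saving of two genuinely requires changing the labeling procedure (in particular, specifying a rule for which maximal independent sets to pick at the early stages and proving the rule interacts correctly with the forbidden-label count), not merely a sharper post-hoc analysis of the unmodified Chang--Kuo output. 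As written, your rigid case could in principle occur for the unaltered algorithm, so without the explicit modification and its analysis the argument has a real gap.
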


For $\Delta = 3$, this gives $\lambda_{2,1}(G) \leq 10$, which is only one away from the bound of 9 required for the $\Delta^2$ Conjecture. The largest step towards the proof of the conjecture was made by Havet, Reed, and Sereni, who proved the conjecture for large $\Delta$. 

\begin{thm*}[Havet, Reed, and Sereni \cite{havet2012griggs}] $\lambda_{2,1}(G) \leq \Delta^2$ for all graphs with $\Delta$ larger than some $\Delta_0 \approx 10^{69}$. Consequently, $\lambda_{2,1}(G) \leq \Delta^2 + C$ for some absolute constant $C$. 
\end{thm*}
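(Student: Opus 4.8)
\bigskip

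The ``consequently'' clause needs no work: for $\Delta \le \Delta_0$ the bound of Gon\c{c}alves gives $\lambda_{2,1}(G) \le \Delta^2 + \Delta - 2 \le \Delta^2 + (\Delta_0 - 2)$, while for $\Delta > \Delta_0$ the first statement gives $\lambda_{2,1}(G) \le \Delta^2$, so $C = \Delta_0 - 2$ works. All the content is therefore in the bound $\lambda_{2,1}(G) \le \Delta^2$ for $\Delta > \Delta_0$, and that is what I would attack, by a probabilistic strategy. First I would reformulate: an $L(2,1)$-labeling with span $\Delta^2$ is exactly a map $c : V(G) \to \{0,1,\dots,\Delta^2\}$ that properly colors $G^2$ and in addition separates every pair of $G$-adjacent vertices by at least $2$. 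After reducing to $G$ connected with $\Delta(G)=\Delta$, I would note that $\Delta(G^2) \le \Delta^2$, so $G^2$ has exactly \emph{one} color more than its maximum degree available --- essentially no slack. The whole game is to locate and exploit the places where $G^2$ is far from being $\Delta^2$-regular, and to control the few places where it is not.

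The key structural input is a local-sparsity dichotomy. Call a vertex $v$ \emph{sparse} if the ball of radius $2$ about $v$ spans noticeably fewer edges than the extremal count, and \emph{dense} otherwise. At a sparse $v$ one has $d_{G^2}(v)$ bounded well below $\Delta^2$, which buys genuine room; at a dense $v$ the second neighborhood is nearly complete, and I would show that the dense vertices organize into a bounded number of ``clusters,'' each within a few edges of a clique of order at most $\Delta+1$, with distinct clusters interacting only weakly.

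With this in hand, the plan is a two-phase coloring with a reserved palette. Set aside a small set $R$ of ``reserve'' colors. In the first phase, color the sparse vertices and the non-clique material by a randomized iterative procedure: in each round every still-uncolored vertex picks a uniformly random admissible color from its current list, the choice is kept if it conflicts with no neighbor, and lists are updated --- where ``conflict'' is taken to include the rule that on a $G$-edge the colors $i-1,i,i+1$ all clash, so that the required $+2$ separation is built in from the start at the cost of only a constant-factor shrinkage of list sizes, which the sparsity slack absorbs. Using the asymmetric (lopsided) Lov\'asz Local Lemma one shows that with positive probability every vertex ends each round with enough colors still available, so that after $O(\log \Delta)$ rounds the sparse part is legally colored and every vertex still has its reserve intact. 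In the second phase, color the dense clusters one at a time: a clique of order $m \le \Delta+1$ only needs its $m$ labels to lie pairwise at least $2$ apart inside $\{0,\dots,\Delta^2\}$, and since $m \ll \Delta^2$ there is room to spare; the weak interaction between clusters together with the reserve colors $R$ should guarantee a valid choice at each step.

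The hard part will be making the structural and the probabilistic parts cooperate. One must prove that near-clique clusters are the \emph{only} obstruction to slack in $G^2$, quantify ``near'' precisely, bound how many clusters can meet a given ball and how badly they can overlap, and then choose the reserve palette $R$ --- which has to be fixed \emph{before} the randomized phase --- simultaneously small enough not to eat into the room needed for the local-lemma computation and large and ``fresh'' enough to finish every cluster. Forcing all of these constants to close at once is exactly why $\Delta_0$ comes out astronomically large ($\approx 10^{69}$); the argument is nowhere near tight, and a more careful accounting would presumably bring the threshold down considerably.
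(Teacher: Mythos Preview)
The paper does not prove this theorem at all; it merely quotes it as a known result from \cite{havet2012griggs} in the survey Section~\ref{bounds}. There is therefore no ``paper's own proof'' to compare your proposal against. Your derivation of the ``consequently'' clause from Gon\c{c}alves's bound is fine and is the standard one-line deduction.

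As for your sketch of the main inequality, it is broadly in the spirit of the actual Havet--Reed--Sereni argument: a dense/sparse decomposition of $G$ (sparse vertices have genuine slack in $G^2$, dense vertices cluster into near-cliques), a randomized partial coloring controlled by the Lov\'asz Local Lemma on the sparse part, and a deterministic finishing step on the dense clusters using reserved colors. But be aware that what you have written is only a strategy, not a proof. Several of the steps you describe as routine are in fact the substance of a long paper: proving that the dense part really does decompose into weakly interacting near-cliques, quantifying the interaction so that a single reserve palette suffices for all clusters simultaneously, and --- the most delicate point --- getting the Local Lemma to close when the number of available colors exceeds $\Delta(G^2)$ by only one. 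Your remark that sparsity ``absorbs'' the constant-factor list shrinkage from the $\pm1$ exclusion is exactly where the real work lies, and it does not follow from anything you have stated. So as a proof proposal this is a credible outline of the right approach, but it would not be accepted as a proof, and in any case the present paper makes no attempt to supply one.
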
 

One might wonder when $\Delta^2$ conjecture is tight. If the graph $G$ is diameter 2, i.e. all vertices are at distance less than two from one another, then $\lambda_{2,1}(G) \geq |V(G)|-1$ because all labels must be distinct. Griggs and Yeh showed that the $\Delta^2$ Conjecture holds for diameter two graphs. The maximum order of a diameter two graph is $\Delta^2 + 1$, so the conjecture must be tight for any diameter two graph of maximum size. There are either three or four such graphs: the 5-cycle, the Petersen Graph, the Hoffman-Singleton graph that has $\Delta = 7$, and possibly one more with $\Delta = 57$. These graphs are called the \emph{Moore Graphs} \cite{hoffman}. According to \cite{griggs92}, if $\Delta \geq 3$ and  $|V(G)| <\Delta^2 + 1$, then $\lambda_{2,1}(G) < \Delta^2$. Therefore, $\lambda_{2,1} \leq \Delta^2 - 1$ for diameter two graphs except for $C_3$, $C_4$ and the Moore Graphs. We have some comments about this bound in Section \ref{tightnesscomments}.

\section{Weighted Graph Model}\label{weighted}

Recall the definition from Section \ref{back} of the weighted graph model. We will use this model to define a slight generalization of the $L(2,1)$ labeling problem.

\begin{defin} 
Given a graph $G = (V,E)$ and a weight function $w: E \rightarrow \mathbb{Z}$, a labeling $\phi: V \rightarrow \{0, 1, ...., t\}$ is \emph{feasible} if and only if $$|\phi(u) - \phi(v)| \geq w(uv)$$ for all $uv \in E$.
\end{defin}
\begin{defin}
\noindent $Span(G, w)$ is the least integer $t$ for which a feasible labeling is possible. \end{defin}
Here we define a restriction of this problem, which we call the $(G, H)$-labeling problem. 

\begin{defin}Given a graph $G$ and a subgraph $H \subset G$, the \emph{$(G, H)$-labeling problem} is to find a feasible labeling of the pair $(G, w)$ with 

$$w(uv) = \left\{ 
\begin{array}{lr} 
2 \textrm{ if } uv \in E(H)\\
1 \textrm{ if } uv \in E(G)\setminus E(H) 
 \end{array}
 \right.
$$
$Span(G, H)$ is $Span(G,w)$ for $w$ as in the definition of the $(G, H)$-labeling problem.
\end{defin}
In other words, we only consider edge weights of 2 and 1. The edges of weight 2 form the subgraph $H$. Note that if $(G , H) = (F^2, F)$ for a graph $F$, then an $L(2,1)$-labeling of $F$ is equivalent to a $(F^2, F)$ labeling of $F^2$, so $\lambda_{2,1}(F) = Span(F^2, F)$. Also note that the set of pairs $(G, H)$ where $G$ and $H$ are finite simple graphs with $\Delta(G) \leq\Delta(H)^2$ and $|V(G)| = n$ contains the set of pairs $(F^2, F)$ where $F$ is a finite simple graph with $|V(F)| =n$.   Hence, we have the following simple result.

\begin{lem}\label{equiv} If $Span(G, H) \leq g(\Delta(H))$ for all pairs of graphs $(G, H)$ with $H \subset G$, $|V(G)| = n$ and $\Delta(G) \leq \Delta(H)^2$, then $\lambda_{2,1}(G) \leq g(\Delta(G))$ for all graphs $G$ of order $n$.
\end{lem}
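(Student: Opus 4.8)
The plan is to reduce the $L(2,1)$-labeling problem directly to the $(G,H)$-labeling problem by exhibiting each graph's square as a member of the family over which the hypothesis ranges. First I would fix an arbitrary graph $F$ with $|V(F)| = n$ and set $G = F^2$, $H = F$. I then need to check that the pair $(G,H) = (F^2, F)$ meets all three conditions in the hypothesis: (i) $H \subset G$, since $E(F) \subseteq E(F^2)$ by the definition of the square of a graph; (ii) $|V(G)| = n$, since $V(F^2) = V(F)$; and (iii) $\Delta(G) \leq \Delta(H)^2$, which is exactly the inequality $\Delta(F^2) \leq \Delta(F)^2$ supplied by Fact \ref{degree}.

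Next I would invoke the observation recorded just before the lemma: an $L(2,1)$-labeling of $F$ is precisely a feasible labeling of the weighted pair $(F^2, w)$ where $w$ gives weight $2$ to the edges of $H = F$ and weight $1$ to the edges of $G \setminus H = F^2 \setminus F$ (two vertices at distance $1$ in $F$ are joined by an $E(H)$-edge and must differ by at least $2$; two vertices at distance $2$ in $F$ are joined by an edge of $E(G)\setminus E(H)$ and must differ by at least $1$; and one may assume labels start at $0$ as noted earlier). Hence $\lambda_{2,1}(F) = Span(F^2, F)$.

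Finally, applying the hypothesis to the pair $(F^2, F)$ — legitimate by the verification in the first paragraph — gives $Span(F^2, F) \leq g(\Delta(H)) = g(\Delta(F))$, and combining this with $\lambda_{2,1}(F) = Span(F^2, F)$ yields $\lambda_{2,1}(F) \leq g(\Delta(F))$. Since $F$ was an arbitrary graph of order $n$, the conclusion follows. There is no genuine obstacle here; the only point needing care is confirming that $(F^2, F)$ really lies in the quantified family, and the sole nontrivial part of that is the degree bound, which is precisely Fact \ref{degree}. It is worth emphasizing what the lemma does \emph{not} claim: the family of pairs $(G,H)$ appearing in the hypothesis is strictly larger than the set of pairs realizable as $(F^2, F)$, so establishing a $Span$ bound for all of them is an \emph{a priori} stronger statement than the $\Delta^2$-type conclusion — the payoff being that this larger, more flexible family is the one amenable to the Hamilton-path arguments used later.
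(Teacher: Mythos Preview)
Your argument is correct and is precisely the approach the paper takes: the text immediately preceding the lemma records that $\lambda_{2,1}(F)=Span(F^2,F)$ and that every pair $(F^2,F)$ lies in the hypothesized family (using Fact~\ref{degree} for the degree condition), which is exactly the reduction you carry out. Your write-up simply makes explicit what the paper leaves as a one-line remark.
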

This will allow us to consider only the more general $(G, H)$-labeling problem in searching for upper bounds. Often proofs of bounds, such as that of Chang and Kuo's bound, only rely on degree conditions. These are cases in which the technique could be extended to the $(G, H)$-labeling problem.\\

In addition, the $(G, H)$-labeling problem can model systems of transmitters in a more accurate way than the $L(2, 1)$-labeling problem. In the $L(2, 1)$-labeling problem, two transmitters can be close but not very close, yet they may still not be at distance two in $G$. For example, a system containing only two transmitters that are close but not very close would have no difference requirement for the frequency. However, in the $\Delta(G, H)$ problem these could be connected by an edge of $G$ at will.

\section{Hamiltonicity}\label{ham}
\begin{figure}[t!]

\begin{center}
 \begin{tikzpicture}[scale=2,auto=left,node distance = 2cm, thick, every node/.style= {circle,fill = blue!20, draw}]
\draw (18:2cm) -- (90:2cm) -- (162:2cm) -- (234:2cm) --
(306:2cm) -- cycle;
\draw (18:1cm) -- (162:1cm) -- (306:1cm) -- (90:1cm) --
(234:1cm) -- cycle;

\foreach \x in {18,90,162,234,306}{
\draw (\x:1cm) -- (\x:2cm);}
\node (1) at (18:2cm) {0};
\node (2) at (90:2cm) {3};
\node (3) at (162:2cm) {1};
\node (4) at (234:2cm) {4};
\node (5) at (306:2cm) {2};

\node (6) at (18:1cm) {5};
\node (7) at (90:1cm) {6};
\node (8) at (162:1cm) {7};
\node (9) at (234:1cm) {8};
\node (10) at (306:1cm) {9};

\draw [draw = red] (1) -- (3);
\tikz \fill[fill = red] (1);
\draw [draw = red] (3) -- (5);
\draw [draw = red] (5) -- (2);
\draw [draw = red](2) -- (4);
\draw [draw = red] (4)--(6);
\draw [draw = red] (6)--(7);
\draw [draw = red] (7)--(8);
\draw [draw = red] (8)--(9);
\draw [draw = red] (9)--(10);

\end{tikzpicture}  
\end{center}

\caption{An $L(2,1)$-labeling of the Petersen graph and a red hamilton path in the complement.}\label{peterham}

\end{figure}
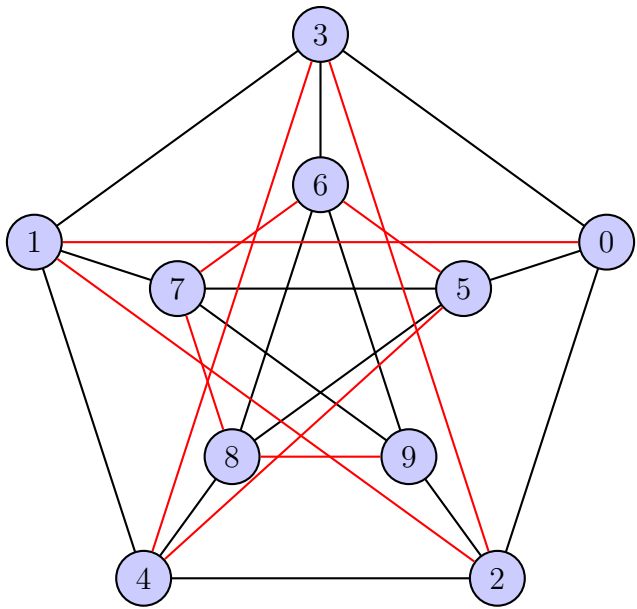

First we include a lemma of Griggs and Yeh to motivate the use of hamilton paths in $L(2,1)$-labelings. 

\begin{lem}[Griggs, Yeh \cite{griggs92}]\label{hamgrigg} There exists an injective $L(2,1)$-labeling of a graph $G$ with span $|V(G)|-1$ if and only if the complement of $G$ has a hamilton path.
	\end{lem}
	
\begin{proof}
$ $
\begin{itemize}
\item[($\Leftarrow$)] Suppose there exists a hamilton path $P$ in $G^c$. If the path's $i^{th}$ vertex is $p_i$, then form a labeling $f$ where $f(p_i) = i - 1$. Clearly the span of $f$ is $|V(G)|-1$. As the labeling is injective, no two vertices receive the same label, so the distance two condition is automatically met. If two vertices are adjacent in $G$, they are by definition \emph{not} adjacent in the path, so their labels differ by at least 2.
\item [($\Rightarrow$)] Suppose there is an injective $L(2,1)$-labeling of a graph $G$ with span $|V(G)|-1$. This is a numbering of the vertices with $\{0, 1, ..., |V(G)| - 1\}$. Form the path by the following rule: the $i^{th}$ vertex of the path $P$ is the vertex labeled $i-1$. Clearly $P$ has $|V(G)|$ vertices. In addition, for $0 \leq i <|V(G)$,  $p_i p_{i+1}$ cannot be in $E(G)$ by the definition of the labeling. Hence $p_i p_{i+1} \in E(G^c)$, so $P$ is a hamilton path.

\end{itemize}
An example is shown in Figure \ref{peterham}.
\end{proof}

The following generalization allows us to use the hamilton path technique to prove results about non-injective $(G,H)$-labelings. We will require a definition, of which an example is depicted in Figure \ref{adjgraph}.

\begin{defin}
Let $(G, H)$ be a pair of graphs with $H\subset G$, and let $C = \{C_0, C_1, ...., C_n\}$ be a proper coloring of $G$. 
	The \emph{color adjacency graph} of the pair $(C,G,H)$, denoted \emph{$\mathcal{CGH}$}, is a new $n$-vertex graph with
	
	$$ V(\mathcal{CGH}) = C$$
	 and
	$$E(\mathcal{CGH}) = \{C_i C_j \textrm{ such that there is an edge of } H \textrm{ between } C_i \textrm{ and } C_j\}.$$

\end{defin}

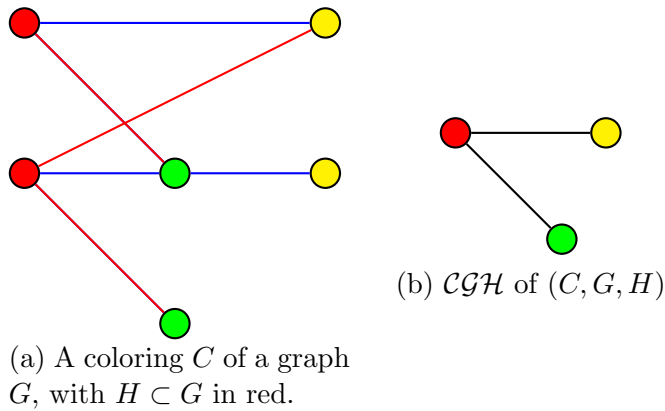
\begin{figure}[h!]
\caption{Red edges are weighted 2 and blue ones are weighted 1.}\label{adjgraph}

\centering
\begin{subfigure}[b!]{0.3\textwidth}
 \begin{tikzpicture}[scale=2,auto=left,node distance = 2cm, thick]
 \tikzstyle{node} = [circle,fill = blue!20,draw];
 \tikzstyle{textnode}=[];
  \node[node][fill = red] (n1) at (2,20) {};
 \node[node][fill = red] (n2) [below of=n1] {};
 
\node[node][fill = green] (n3) [right of = n2] {};
\node[node][fill = green] (n4) [below of = n3] {};
 
\node[node][fill = yellow] (n5) [right of  = n3] {};
 \node[node][fill = yellow] (n6) [above of = n5] {};
 

\foreach \from/\to in {n1/n3, n2/n4, n2/n3, n3/n5, n1/n6}{
 	\draw[draw = blue] (\from) -- (\to);}
	
\foreach \from/\to in {n1/n3, n2/n4, n2/n6}{
	\draw[draw = red] (\from) -- (\to);}
\end{tikzpicture}
\subcaption{A coloring $C$ of a graph $G$, with $H\subset G$ in red.}
\end{subfigure}
\begin{subfigure}[b!]{0.3\textwidth}
\centering
 \begin{tikzpicture}[scale=2,auto=left,node distance = 2cm, thick] 
 \tikzstyle{node} = [circle,fill = blue!20,draw];
 
 \node[node][fill = red] (n7) {};
 
\node[node][fill = green] (n8) [below right of = n7] {};
 
 \node[node][fill = yellow] (n9) [right of = n7] {};
 

 \foreach \from/\to in {n7/n8, n7/n9}{
 	\draw (\from) -- (\to);}

\end{tikzpicture}  

\subcaption{$\mathcal{CGH}$ of $(C,G,H)$}
\end{subfigure}

\end{figure}

The next lemma is an equivalent formulation of the $(G, H)$ labeling problem based on the color adjacency graph. 

\begin{lem}
$Span(G,H) \leq L - 1$ if and only if there exists a coloring $C$ of $G$ with $L$ classes, which can be empty, such that complement $\mathcal{CGH}^c$ of the color adjacency graph of $(C,G,H)$ has a hamilton path.

\end{lem}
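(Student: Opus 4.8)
The plan is to prove both directions of the equivalence by translating between feasible labelings of the weighted pair $(G,w)$ and colorings of $G$ whose color adjacency graph has a Hamilton path in its complement. The key observation is that a feasible labeling $\phi: V(G) \to \{0,1,\dots,L-1\}$ induces a coloring $C = \{C_0,\dots,C_{L-1}\}$ of $G$ by setting $C_i = \phi^{-1}(i)$, and conversely any ordering of the color classes of a coloring gives a candidate labeling by assigning the index in that ordering. So I would first establish that a coloring with $L$ (possibly empty) classes together with a specified linear order on those classes corresponds exactly to a labeling $\phi: V(G)\to\{0,\dots,L-1\}$, where vertices in the $k$-th class in the order receive label $k-1$; the labeling is proper for the weight-$1$ edges precisely because $C$ is a proper coloring (adjacent vertices lie in different classes, so their labels differ by at least $1$).

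For the forward direction ($\Rightarrow$), I would start from a feasible labeling $\phi$ witnessing $Span(G,H) \le L-1$, form the coloring $C$ as above, and define the linear order on the classes by their labels. The remaining content is: an edge $uv \in E(H)$ has weight $2$, so $|\phi(u) - \phi(v)| \ge 2$, which means $u$ and $v$ lie in classes whose labels differ by at least $2$, i.e. the classes $C_{\phi(u)}$ and $C_{\phi(v)}$ are \emph{not} consecutive in the chosen order. Now list the classes in order as $C_0, C_1, \dots, C_{L-1}$; consecutive pairs $C_iC_{i+1}$ are then never edges of $\mathcal{CGH}$ (since an $H$-edge between two classes would force a label gap of at least $2$, contradicting consecutiveness), so $(C_0,C_1,\dots,C_{L-1})$ is a path in $\mathcal{CGH}^c$ using every vertex — a Hamilton path. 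This mirrors exactly the argument in Lemma \ref{hamgrigg}, with ``adjacent in $G$'' replaced by ``joined by an $H$-edge'' and ``injective'' relaxed to ``proper coloring.''

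For the reverse direction ($\Leftarrow$), suppose $\mathcal{CGH}^c$ has a Hamilton path $(C_{\sigma(0)}, C_{\sigma(1)}, \dots, C_{\sigma(L-1)})$. Define $\phi$ by sending every vertex of $C_{\sigma(k)}$ to $k$. Then $\phi: V(G) \to \{0,\dots,L-1\}$ has span at most $L-1$. I need to check feasibility: for $uv \in E(G)\setminus E(H)$ (weight $1$), $u$ and $v$ lie in distinct classes since $C$ is proper, so $|\phi(u)-\phi(v)| \ge 1$; for $uv \in E(H)$ (weight $2$), there is an $H$-edge between the classes containing $u$ and $v$, so those two classes are \emph{adjacent} in $\mathcal{CGH}$, hence \emph{non}-adjacent in $\mathcal{CGH}^c$, hence not consecutive on the Hamilton path, so their positions $k, k'$ satisfy $|k-k'|\ge 2$, giving $|\phi(u)-\phi(v)| \ge 2$. (One should note that $H$ being a subgraph of $G$ on the same vertex set means $C$ restricts to a proper coloring relevant to both edge types, and that empty classes are harmless — they just occupy positions on the path.) The main obstacle, such as it is, is bookkeeping: one must be careful that the coloring is allowed to have empty classes (so that the number of classes can be exactly $L$ rather than the chromatic number) and that the correspondence between ``consecutive on a Hamilton path of $\mathcal{CGH}^c$'' and ``label difference at least $2$'' is handled symmetrically in both directions; there is no real combinatorial difficulty beyond this translation.
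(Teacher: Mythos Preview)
Your proposal is correct and follows essentially the same argument as the paper's own proof: in both, a feasible labeling with span at most $L-1$ is identified with an ordered partition of $V(G)$ into $L$ (possibly empty) classes, with the weight-$1$ condition corresponding to $C$ being a proper coloring of $G$ and the weight-$2$ condition corresponding to consecutive classes having no $H$-edge between them, i.e.\ forming a Hamilton path in $\mathcal{CGH}^c$. Your write-up is slightly more explicit about the bookkeeping (empty classes, the role of $H\subset G$ in ensuring $H$-adjacent vertices land in distinct classes), but there is no substantive difference in approach.
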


\begin{proof}We begin with the backward direction. By assumption, $\mathcal{CGH}^c$ has a hamilton path $P = \{p_0, p_1, ...., p_{l-1}\}$. Recall that the vertices of $P$ are color classes partitioning $G$. Let $f:V(G)\rightarrow \mathbb{Z}$ be defined as $f:v \mapsto i$ where $i$ is the unique index such that $v \in p_i $. We now check that $f$ is a $(G,H)$-labeling. If $xy \in G$, then $x$ and $y$ are given two different labels because $C$ is a coloring of $G$. If $xy \in H$, then $x$ and $y$ are in two distinct color classes $p_i$ and $p_j$ such that $p_ip_j \in E(\mathcal{CGH})$. Then $p_ip_j \notin E(\mathcal{CGH}^c)$, so $i \neq j \pm 1$ because otherwise $p_ip_j \in E(P)$. Therefore $|f(x) - f(y)| \geq 2$, and $f$ is an $(G,H)$-labeling for $G$. \\

\noindent For the forward direction, we assume that there is a $(G,H)$-labeling $f$ of $G$ with span at most than $L - 1$. For $0 \leq 0 <L$, let $P_j$ be the set of vertices labeled $j$. If $j$ not used as a label, add an empty class $P_j$. Observe that $P = \{P_0, ...., P_{L-1}\}$ is a coloring of $G$ with $L$ classes, and is in fact a hamilton path in $\mathcal{PGH}^c$. This is because for $0 \leq i \leq L-2$, $P_iP_{i+1}\neq \mathcal{PGH}^c$ means there is an edge of $H$ between $P_i$ and $P_{i+1}$. This implies that two vertices labeled $i$ and $i+1$ are adjacent in $H$, a contradiction to the fact that $f$ is a $(G,H)$-labeling.\end{proof} 

In order to use the above formulation of the problem to find labelings, it is necessary to prove the existence of hamilton paths. For this we will present a few theorems relating the degrees of the vertices in a graph to its hamiltonicity. We include the proofs for completeness.

\begin{thm}[Bondy and Chv\'{a}tal \cite{chvatal1972}]\label{bondy} Let $G$ be a graph with $n \geq 3$ vertices. If $uv \notin E(G)$, $u \neq v$, and $d(u) + d(v) \geq n$, then $G$ has a hamilton cycle if and only if $G + uv$ does. 
\end{thm}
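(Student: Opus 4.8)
The plan is to dispose of the forward implication in one line and to prove the converse by a pigeonhole (``crossing'') argument on a hamilton path of $G$. The forward direction is immediate: any hamilton cycle of $G$ is, as a cyclic vertex sequence, also a hamilton cycle of $G + uv$, since $E(G) \subseteq E(G+uv)$. For the converse I would argue by contradiction: suppose $G + uv$ has a hamilton cycle but $G$ does not. Then every hamilton cycle of $G + uv$ must use the edge $uv$, as otherwise it would already be a hamilton cycle of $G$; deleting $uv$ from such a cycle leaves a hamilton path of $G$ from $u$ to $v$, which I write as $u = v_1, v_2, \dots, v_n = v$ with $v_i v_{i+1} \in E(G)$ for $1 \le i \le n-1$.

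The key step is then a counting argument. Set
\[
S = \{\, i : 1 \le i \le n-1,\ v_1 v_{i+1} \in E(G) \,\}, \qquad
T = \{\, i : 1 \le i \le n-1,\ v_i v_n \in E(G) \,\},
\]
so that $|S| = d(u)$ and $|T| = d(v)$, both subsets of the $(n-1)$-element set $\{1, \dots, n-1\}$. The hypothesis gives $|S| + |T| = d(u) + d(v) \ge n > n-1$, so $S \cap T \ne \emptyset$; pick $i$ in the intersection. Such an $i$ is neither $1$ nor $n-1$, for either value would force the non-edge $uv = v_1 v_n$ to lie in $E(G)$. Hence $2 \le i \le n-2$, with $v_1 v_{i+1} \in E(G)$ and $v_i v_n \in E(G)$, and therefore
\[
v_1, v_2, \dots, v_i, v_n, v_{n-1}, \dots, v_{i+1}, v_1
\]
is a hamilton cycle of $G$, contradicting our assumption. (The hypothesis $n \ge 3$ only guarantees that ``hamilton cycle'' is meaningful; for $n = 2$ the degree condition cannot hold.)

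I expect the only care needed is in that last display: one must check that rotating and reversing the path segment really does reassemble into a single cycle through all $n$ vertices, and that excluding the indices $1$ and $n-1$ is exactly what keeps the forbidden edge $uv$ from being demanded. The degree hypothesis $d(u)+d(v) \ge n$ is used at precisely one point — forcing $S$ and $T$ to overlap — and that is the crux of the whole argument; everything else is bookkeeping.
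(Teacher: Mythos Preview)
Your proof is correct and follows essentially the same approach as the paper's: both obtain a hamilton $u$--$v$ path in $G$ from the cycle in $G+uv$, and both use the pigeonhole/crossing argument on the path to locate an index $i$ with $v_1v_{i+1},\, v_iv_n \in E(G)$, then re-splice to form a hamilton cycle in $G$. The only differences are cosmetic --- your explicit contradiction framing and index sets $S,T$ versus the paper's direct inclusion--exclusion on $N(v)$ and the shifted set $P(u)$.
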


\begin{proof} If $n \leq 2$, then  $uv \notin E(G)$, $u \neq v$, and $d(u) + d(v) \geq n$ is impossible. If $G$ has a hamilton cycle, then $G+uv$ clearly does because adding an edge will take away no edge of the cycle. If $G + uv$ has a hamilton cycle, $G$ has a hamilton path $u p_1p_2 ... p_{n-2} v$ that starts at $u$ and begins at $v$. Consider the neighborhood of $u$, $N(u) \subset \{p_1, ...., p_{n-2}\}$ .  Define $P(u)$ to be the set $\{p_i \textrm{ such that } p_{i+1} \in N(u)\}$. One can see that $P(u)-u \subset  \{p_1, ...., p_{n-3}\}$, and that $|P(u) - u| = |N(u)|-1$.  Note that $N(v), P(u)-u \subset G - v - u$, which has cardinality $n-2$. Now 
\begin{eqnarray*}|N(v)\cap (P(u) - u)|  = |N(v)| + |P(u)-u|- |N(v) \cup (P(u)-u)| \\
\geq |N(v)| + |P(u)-u|- (n-2) =  |N(v)| + |N(u)| -1- (n-2) \geq 1.
\end{eqnarray*}
This means that there is a vertex $p_i$ in $P(u) \cap N(v)$. Then $u p_{i+1} p_{i+2}....v p_{i} p_{i-1} ....p_1 u $ is a hamilton cycle in $G$. 
\end{proof}

\begin{thm}[P\'{o}sa, \cite{kronk}]\label{posathm} Let $G$ have $n \geq 3$ vertices. If for every $k$, $1\leq k \leq (n-1)/ 2 $, \\
$|\{v : d(v) \leq k \}| < k$, then $G$ contains a hamilton cycle. 
\end{thm}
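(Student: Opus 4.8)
The plan is to argue by contradiction, using the ``maximal counterexample plus path rotation'' technique that already underlies the proof of Theorem~\ref{bondy}. Suppose $G$ has $n\ge 3$ vertices and satisfies the degree hypothesis but has no hamilton cycle. The first point is that the hypothesis is monotone under adding edges: inserting an edge only raises degrees, so it can only shrink each set $\{v:d(v)\le k\}$. Hence I may pass to a graph $G$ on the same vertex set that is \emph{edge-maximal} subject to satisfying the hypothesis and having no hamilton cycle. Since $K_n$ is hamiltonian for $n\ge 3$ we have $G\ne K_n$, so $G$ has a pair of non-adjacent vertices; among all such pairs I choose $\{u,v\}$ maximizing $d(u)+d(v)$, relabeled so that $d(u)\le d(v)$, and I set $h=d(u)$.

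Next I extract a hamilton path with controlled endpoints. By edge-maximality $G+uv$ has a hamilton cycle, and that cycle must use the edge $uv$ (otherwise it already lies in $G$); deleting $uv$ leaves a hamilton path $u=u_1,u_2,\dots,u_n=v$ of $G$. Now comes the rotation: for every neighbor $u_i$ of $u$ on this path (so $uu_i\in E(G)$, and necessarily $2\le i\le n-1$ because $u\not\sim v$), the reordered sequence $u_{i-1}u_{i-2}\cdots u_1 u_i u_{i+1}\cdots u_n$ is again a hamilton path, now running from $u_{i-1}$ to $v$. Two things follow. First, $u_{i-1}$ cannot be adjacent to $v$, for otherwise closing this rotated path would give a hamilton cycle in $G$; thus $\{u_{i-1},v\}$ is a non-adjacent pair, and the maximal choice of $d(u)+d(v)$ forces $d(u_{i-1})\le d(u)=h$. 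Second, the set $W=\{u_{i-1}: uu_i\in E(G)\}$ has exactly $h$ elements, since $u_i\mapsto u_{i-1}$ is injective on the $h$ neighbors of $u$, and it contains $u$ itself (take $i=2$). So $W$ is a set of $h$ vertices, each of degree at most $h$.

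To reach a contradiction I still need $h\le (n-1)/2$, so that the hypothesis may be invoked at $k=h$. For this I check that $W$ and $N(v)$ are disjoint subsets of $\{u_1,\dots,u_{n-1}\}$: if some $u_{i-1}$ belonged to both then $u_1u_2\cdots u_{i-1}\,v\,u_{n-1}u_{n-2}\cdots u_i\,u_1$ would be a hamilton cycle of $G$. Disjointness gives $h+d(v)=|W|+|N(v)|\le n-1$, and since $h\le d(v)$ this yields $h\le (n-1)/2$; also $h\ge 1$ because $u$ has the path-neighbor $u_2$. Then $W$ witnesses $|\{v:d(v)\le h\}|\ge h$, contradicting the hypothesis at $k=h$. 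The only delicate part of the argument is the bookkeeping in the rotation step — verifying that the free endpoint ``moves'' to $u_{i-1}$ rather than $u_{i+1}$, that $u$ is correctly included when $i=2$, and that every neighbor of $u$ sits strictly inside the path so that $|W|=h$; beyond that it is just the disjointness count, and no tools past those in the proof of Theorem~\ref{bondy} are required.
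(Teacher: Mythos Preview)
Your proof is correct and follows essentially the same route as the paper's: pass to an edge-maximal non-hamiltonian supergraph, pick a non-adjacent pair $u,v$ of maximum degree sum, obtain a hamilton $u$--$v$ path, and use the predecessor set $W=P(u)$ to find $h=d(u)$ vertices of degree at most $h$. The only cosmetic difference is that the paper invokes Theorem~\ref{bondy} to conclude $d(u)+d(v)<n$ (hence $h\le(n-1)/2$) before running the rotation, whereas you re-derive this bound directly via the disjointness of $W$ and $N(v)$ inside $\{u_1,\dots,u_{n-1}\}$; this is the same counting that underlies Theorem~\ref{bondy}, so the arguments coincide.
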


\begin{proof}
Suppose $G$ does not have a hamilton cycle. Connect nonadjacent vertices one at a time until no edge can be added without creating a hamilton cycle. Call this new graph $\tilde{G}$. The hypotheses of Theorem \ref{posathm} are still satisfied by $\tilde{G}$ , as we only increased degrees. By Theorem \ref{bondy}, now all all nonadjacent distinct vertices $u$ and $v$ have $d(u) + d(v) < n$. Choose a pair of distinct nonadjacent vertices $u,v$ with $d(u) + d(v)$ largest possible. This can be done because $\tilde{G}$ is not a complete graph. \\
Like in the proof of Theorem \ref{bondy}, there is a hamilton path $up_1....p_{n-2}v$. One of $u$ and $v$ must have degree $k$ less than or equal to $n-1/2$, so suppose without loss of generality that this vertex is $u$. $k$ is not zero by the hypothesis. Allow $P(u)$ to be the same as in the proof of Theorem \ref{bondy}. No vertex in $P(u)$ is adjacent to $v$, or there would be a hamilton cycle. Also, if $x \in P(u)$ then $d(x) \leq k$ by maximality of $d(u) + d(v)$. But $|P(u)| = |N(u)| = d(u) = k$; now there are $k$ vertices of degree at most $k$, a contradiction.  \end{proof}

The following corollary is used to translate these theorems into facts about hamilton paths rather than cycles. The proof follows a classic trick. 

\begin{cor}\label{posacor}
Let $G$ have $n$ vertices. If for every $k$, $0\leq k \leq (n-2)/ 2 $, \\
$|\{v : d(v) \leq k \} | \leq k$, then $G$ has a hamilton path. 
\end{cor}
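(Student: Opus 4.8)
The plan is to reduce to P\'{o}sa's theorem (Theorem \ref{posathm}) by the classical device of adjoining a universal vertex. First I would form the graph $G'$ obtained from $G$ by adding one new vertex $w$ joined to every vertex of $G$; then $G'$ has $n+1$ vertices, $d_{G'}(w) = n$, and $d_{G'}(v) = d_G(v) + 1$ for every $v \in V(G)$. The point of this construction is that $G$ has a hamilton path if and only if $G'$ has a hamilton cycle: a hamilton path $p_1 p_2 \cdots p_n$ of $G$ closes up through $w$ to the hamilton cycle $w p_1 p_2 \cdots p_n w$ of $G'$, and conversely any hamilton cycle of $G'$ passes through $w$, so deleting $w$ leaves a hamilton path of $G$.

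Next I would verify that $G'$ satisfies the hypothesis of Theorem \ref{posathm}: for every $k$ with $1 \le k \le ((n+1)-1)/2 = n/2$, one needs $|\{v \in V(G') : d_{G'}(v) \le k\}| < k$. Since $k \le n/2 \le n = d_{G'}(w)$, the vertex $w$ is never counted (for $n \ge 1$, its degree exceeds every admissible $k$), so this set equals $\{v \in V(G) : d_G(v) + 1 \le k\} = \{v \in V(G) : d_G(v) \le k-1\}$. Putting $j = k-1$, the range $1 \le k \le n/2$ translates exactly to $0 \le j \le (n-2)/2$, which is precisely the range in which the hypothesis of the corollary is assumed; hence $|\{v : d_G(v) \le j\}| \le j = k - 1 < k$, as required.

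Finally I would handle the small cases where Theorem \ref{posathm} does not directly apply, since it requires at least three vertices, i.e. $n \ge 2$. For $n \le 1$ the empty or single-vertex graph trivially has a hamilton path, and for $n = 2$ the hypothesis at $k = 0$ forbids isolated vertices, which forces the two vertices to be adjacent and hence to form a hamilton path. For $n \ge 2$ we have $|V(G')| = n+1 \ge 3$, so Theorem \ref{posathm} gives a hamilton cycle in $G'$, and therefore a hamilton path in $G$. I do not expect a genuine obstacle here; the only thing requiring care is the index bookkeeping — the off-by-one shift $d_{G'}(v) = d_G(v)+1$ together with the shrinking of the range of admissible thresholds from $[0, (n-2)/2]$ to $[1, n/2]$, and the verification that the added vertex $w$ is indeed excluded from every count in play.
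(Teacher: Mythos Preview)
Your proposal is correct and follows exactly the approach the paper takes: adjoin a dominating vertex, apply P\'{o}sa's theorem to the enlarged graph, and read off a hamilton path after removing that vertex. Your write-up is in fact more careful than the paper's one-line proof, spelling out the index shift and the small-$n$ cases explicitly.
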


\begin{proof}The corollary follows easily by adding a dominating vertex to $G$ and observing that by Posa's Theorem the new graph is hamiltonian. Removing the dominating vertex leaves a hamilton path in the original graph.\end{proof}

\section{Main Results}\label{main}

In order to use theorems about hamilton paths on the color adjacency graph, we will need to control its maximum degree. We can do this by controlling the size of the color classes, which we can accomplish by the use of a powerful result of Szemer\'{e}di and Hajnal on equitable colorings \cite{hajnal}.

\begin{thm}[Hajnal, Szemer\'{e}di, \cite{hajnal}, \cite{kierstead}, \cite{shortpf}]\label{hajnal} If $\Delta(G) \leq r$, then $G$ can be equitably colored with $r +1$ colors; that is, the sizes of the color classes differ by at most one. \end{thm}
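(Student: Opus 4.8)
The plan is to follow the architecture of the known ``short'' proofs of this theorem (Hajnal--Szemer\'edi \cite{hajnal}, and the streamlined versions \cite{kierstead}, \cite{shortpf}); there is no slick one-line argument, but the high-level structure is clean. First I would reduce to the case $(r+1)\mid n$, where $n=|V(G)|$: adjoin to $G$ a disjoint clique on $a$ new vertices, where $a$ is chosen so that $n+a$ is the next multiple of $r+1$ (so $0\le a\le r$). This keeps the maximum degree at most $r$ (the new vertices have degree $a-1\le r-1$), and in any equitable coloring of the padded graph --- which, having order divisible by $r+1$, must use classes of a single common size --- the $a$ clique vertices receive distinct colors, so deleting them removes at most one vertex from each class and leaves an equitable coloring of $G$. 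Hence it suffices to prove: if $\Delta(G)\le r$ and $n=(r+1)s$, then $G$ has a proper $(r+1)$-coloring all of whose classes have size exactly $s$.

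I would prove this by induction on $|E(G)|$. The base case $|E(G)|=0$ is trivial (partition arbitrarily into $r+1$ blocks of size $s$). For the step, pick an edge $xy$ and apply the induction hypothesis to $G-xy$, obtaining classes $V_1,\dots,V_{r+1}$, each of size $s$, proper for $G-xy$. If $x$ and $y$ lie in different classes this coloring is already proper for $G$ and we are done, so suppose $x,y\in V_1$. Since $x$ is adjacent to $y\in V_1$ and $d(x)\le r$, $x$ has at most $r-1$ neighbors outside $V_1$, spread over the $r$ classes $V_2,\dots,V_{r+1}$; by pigeonhole some $V_j$ contains no neighbor of $x$. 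Move $x$ to that $V_j$. The result is a proper coloring of $G$ in which $V_j$ has size $s+1$, $V_1$ has size $s-1$, and every other class has size $s$ --- a \emph{near-equitable} coloring. So the theorem reduces to the following lemma, which is the real content: \emph{if $G$ has $\Delta(G)\le r$ and a proper $(r+1)$-coloring with one class of size $s+1$, one of size $s-1$, and the rest of size $s$, then $G$ has a proper $(r+1)$-coloring with all classes of size $s$.}

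To prove this near-equitable lemma I would run a cascade of single-vertex moves. Call a vertex $u$ in class $V_i$ \emph{$V_j$-movable} if $u$ has no neighbor in $V_j$; moving a $V_j$-movable vertex from $V_i$ into $V_j$ keeps the coloring proper and shifts one unit of size from $V_i$ to $V_j$. Form an auxiliary digraph $D$ on the $r+1$ classes with an arc $V_i\to V_j$ whenever $V_i$ has a $V_j$-movable vertex, and let $A$ be the big class and $Z$ the deficient one. If $D$ contains a directed path $A=W_0\to W_1\to\cdots\to W_t=Z$, process the corresponding moves starting from the $Z$-end: each chosen vertex is still movable into its target class when its turn comes, because that target has so far only lost a vertex and not yet gained one. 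This restores all classes to size $s$ while remaining proper, and we are done. All the difficulty is in the case where no such path exists: then the set $\mathcal{A}$ of classes reachable from $A$ in $D$ avoids $Z$, and every vertex in every class of $\mathcal{A}$ has a neighbor in every class outside $\mathcal{A}$.

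This closed-up case is the main obstacle, and it is where the real ingenuity of the proof lives. A crude double count of the edges between $\bigcup\mathcal{A}$ and the rest of $G$ (each vertex on the $\mathcal{A}$-side sends at least one edge to each outside class, each vertex on the other side has degree at most $r$) does \emph{not} by itself give a contradiction once $s$ is large; it only bites when $\mathcal{A}$ is nearly all of the classes. So one must be cleverer: work in a minimal counterexample, choose the edge $xy$, the near-equitable coloring, and the set $\mathcal{A}$ extremally, distinguish cross edges that are ``solo'' from those that are not, and then either find a class of $\mathcal{A}$ containing two non-adjacent vertices that can be shifted at once, or relocate a suitably low-degree vertex on the far side so as to create a fresh movable vertex inside $\mathcal{A}$ --- a non-local exchange invisible to the plain digraph $D$ --- before closing the count with a discharging-type argument. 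Making this dichotomy precise and checking that the process terminates is exactly the technical heart of \cite{hajnal} and \cite{kierstead}; since in this thesis the theorem is used only as a black box to keep color classes nearly equal in Section~\ref{main}, I would present the reduction above and cite those papers for the rest.
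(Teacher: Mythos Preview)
The paper does not prove this theorem at all; it merely states the Hajnal--Szemer\'edi result with citations and then invokes it as a black box in the proof of Theorem~\ref{main}. So there is nothing in the paper to compare your argument against: your decision at the end --- present the easy reductions and cite \cite{hajnal}, \cite{kierstead}, \cite{shortpf} for the technical core --- is already more than the paper itself does.

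For what it is worth, the outline you give is the standard short-proof architecture and the parts you write out are correct: the padding by a disjoint clique of size $a\le r$ to reduce to $(r+1)\mid n$, the edge-deletion induction producing a near-equitable coloring, and the cascade along a directed path in the ``movability'' digraph $D$ are all fine as stated. You are also right that the naive edge count between $\bigcup\mathcal{A}$ and its complement does not close the argument and that the real work lies in the blocked case; your description of that case is an honest summary of what \cite{kierstead} and \cite{shortpf} do, not a proof, and you say so. Since the thesis only needs the theorem as input to control color-class sizes, citing it (as the paper does) is entirely appropriate.
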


We now present our main result. 

\begin{thm}\label{main}Let $(G, H)$ be a pair of graphs with $H \subset G$, and let $L$ be an integer. Let $\Delta = \Delta(H) \geq 1$.
Suppose the following hold:
\begin{itemize}
\item[(i)] $\Delta(G) \leq \Delta^2,$ and
\item[(ii)] $L\geq \Delta^2 + 1.$

\end{itemize}
Then $Span(G,H) \leq L - 1$ if 

$$|V(G)| \leq (L - \Delta)\left(\left\lfloor\frac{L-1}{2\Delta} \right\rfloor + 1\right) - 1.$$
\end{thm}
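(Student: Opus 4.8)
The plan is to invoke the lemma characterizing $Span(G,H)\le L-1$ as the existence of a proper coloring $C$ of $G$ into exactly $L$ classes (empty ones permitted) whose color adjacency graph $\mathcal{CGH}$ has a hamilton-path complement, and then to produce such a coloring together with a hamilton path in $\mathcal{CGH}^c$ via the P\'{o}sa-type Corollary~\ref{posacor}. Write $n=|V(G)|$ and $s=\lfloor (L-1)/(2\Delta)\rfloor$. First I would dispose of the degenerate case $s=0$: then $L-1<2\Delta$, which with (ii) forces $\Delta^2+1\le L\le 2\Delta$, so $(\Delta-1)^2\le 0$, giving $\Delta=1$ and $L=2$; but then the hypothesis reads $n\le 0$ and there is nothing to prove. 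So assume $s\ge 1$ from now on.

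Next I would build the coloring from the Hajnal--Szemer\'{e}di theorem. By (i) and (ii) we have $\Delta(G)\le\Delta^2\le L-1$, so Theorem~\ref{hajnal} applied with $r=L-1$ gives an equitable, hence proper, coloring of $G$ into exactly $L$ classes; each class then has size $\lfloor n/L\rfloor$ or $\lceil n/L\rceil$. Since the hypothesis gives $n\le(L-\Delta)(s+1)-1<L(s+1)$, we get $\lceil n/L\rceil\le s+1$, so no class has more than $s+1$ vertices. Let $\rho$ be the number of classes of size exactly $s+1$. If $\rho\ge 1$, equitability forces the remaining $L-\rho$ classes to have size exactly $s$, so $n=Ls+\rho$ and the hypothesis yields
\[
\rho=n-Ls\le(L-\Delta)(s+1)-1-Ls=L-1-(s+1)\Delta;
\]
if $\rho=0$, every class has size at most $s$.

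Then I would estimate degrees in $\mathcal{CGH}^c$. A color class is independent in $G$, hence in $H$, so a class of size $m$ is incident to at most $m\Delta$ edges of $H$ and therefore has degree at most $m\Delta$ in $\mathcal{CGH}$ and at least $L-1-m\Delta$ in $\mathcal{CGH}^c$. As $s\Delta\le\frac{L-1}{2}$, every class of size at most $s$, and in particular every empty class, has degree at least $\frac{L-1}{2}$ in $\mathcal{CGH}^c$, while every class of size $s+1$ has degree at least $L-1-(s+1)\Delta\ge\rho$ there. To apply Corollary~\ref{posacor} to $\mathcal{CGH}^c$, which has $L$ vertices, I fix $k$ with $0\le k\le\frac{L-2}{2}$; then $k<\frac{L-1}{2}$, so $\{v:d(v)\le k\}$ contains no class of size at most $s$ and is therefore a subset of the set of size-$(s+1)$ classes. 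Since each of those has degree at least $\rho$, this set is empty if $k<\rho$, and has at most $\rho$ elements, hence at most $k$, if $k\ge\rho$. Either way $|\{v:d(v)\le k\}|\le k$, so Corollary~\ref{posacor} yields a hamilton path in $\mathcal{CGH}^c$, and the characterization lemma gives $Span(G,H)\le L-1$.

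I expect the crux to be the implication $n\le(L-\Delta)(s+1)-1\Rightarrow\rho\le L-1-(s+1)\Delta$: this is the only place the precise shape of the bound, and in particular its ``$-1$'', is used, and the resulting count of oversized color classes must coincide exactly with the guaranteed surplus $L-1-(s+1)\Delta$ of non-neighbors of such a class in $\mathcal{CGH}^c$ (in the extremal case the two are equal). The remaining pieces --- the estimate $s\Delta\le\frac{L-1}{2}$, the $m\Delta$ edge count, and the boundary checks ($\rho=0$, empty classes, and the $\Delta=1$, $L=2$ corner) --- should be routine floor arithmetic.
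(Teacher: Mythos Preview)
Your proposal is correct and follows essentially the same approach as the paper: apply Hajnal--Szemer\'{e}di with $L$ classes, bound class sizes by $s+1$, bound degrees in $\mathcal{CGH}^c$ via the $m\Delta$ edge count, and finish with the P\'{o}sa-type Corollary~\ref{posacor}. The only cosmetic difference is the case split---the paper conditions on $q=\lfloor n/L\rfloor$ (either $q\le s-1$ or $q=s$) whereas you condition on $\rho$, the number of size-$(s+1)$ classes---and you additionally isolate the degenerate $s=0$ corner, which the paper leaves implicit.
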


Before the proof of Theorem \ref{main}, we will discuss two corollaries that have implications for the $\Delta^2$ Conjecture.

\begin{cor}\label{lambdacor} Let $G$ be a graph with $\Delta = \Delta(G) \geq 1$, and let $L$ be an integer with $L\geq \Delta^2 + 1$. Then $\lambda_{2,1}(G) \leq L - 1$ if 

$$|V(G)| \leq (L - \Delta)\left(\left\lfloor\frac{L-1}{2\Delta} \right\rfloor + 1\right) - 1.$$
\end{cor}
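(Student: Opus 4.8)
The plan is to obtain Corollary~\ref{lambdacor} as the specialization of Theorem~\ref{main} to the pair $(G^2, G)$. Recall from Section~\ref{weighted} that an $L(2,1)$-labeling of $G$ is exactly a $(G^2, G)$-labeling of $G^2$, so that $\lambda_{2,1}(G) = Span(G^2, G)$. Thus it suffices to bound $Span(G^2, G)$, and for that I would verify that $(G^2, G)$, with $H = G$, meets every hypothesis of Theorem~\ref{main}. Setting $\Delta = \Delta(H) = \Delta(G)$ makes the ``$\Delta$'' of Theorem~\ref{main} agree with the ``$\Delta$'' of the corollary. Condition~(i) of Theorem~\ref{main}, namely $\Delta(G^2) \le \Delta(G)^2 = \Delta^2$, is precisely Fact~\ref{degree}; condition~(ii), $L \ge \Delta^2 + 1$, is assumed; and since $|V(G^2)| = |V(G)|$, the size hypothesis of Theorem~\ref{main} is word-for-word the hypothesis $|V(G)| \le (L-\Delta)(\lfloor (L-1)/(2\Delta)\rfloor + 1) - 1$ of the corollary. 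Theorem~\ref{main} then gives $Span(G^2, G) \le L-1$, hence $\lambda_{2,1}(G) \le L - 1$. This is exactly the reduction that Lemma~\ref{equiv} packages in general form, invoked here only for the single pair $(G^2, G)$.

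Consequently the corollary itself presents essentially no obstacle; all the work lives in Theorem~\ref{main}. The only point requiring a moment's care is the one flagged above: making sure the two meanings of $\Delta$ line up, i.e.\ that condition~(i) of Theorem~\ref{main} is read as $\Delta(G^2)\le\Delta(G)^2$ rather than $\Delta(G)\le\Delta(G)^2$, and that $L$ enters as a free parameter constrained only by $L\ge\Delta^2+1$ and the vertex count.

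For completeness I would also indicate how the underlying Theorem~\ref{main} is proved, since that is where the real difficulty sits. The plan there is: first use the color-adjacency-graph reformulation to reduce $Span(G,H) \le L-1$ to exhibiting a proper coloring of $G$ into exactly $L$ (possibly empty) classes whose color adjacency graph $\mathcal{CGH}$ has a hamiltonian complement; then apply the Hajnal--Szemer\'edi theorem (Theorem~\ref{hajnal}), legitimate since $\Delta(G)\le\Delta^2\le L-1$, to colour $G$ equitably with a suitable number of colours (padded with empty classes to total $L$), which caps each class size and hence bounds $\Delta(\mathcal{CGH})$ by roughly $\Delta$ times that size; and finally apply Corollary~\ref{posacor} (P\'osa) to $\mathcal{CGH}^c$, in which the empty classes are universal and every nonempty class has small degree in $\mathcal{CGH}$, to extract the hamilton path. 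The main obstacle is the arithmetic: one must choose the number of colour classes optimally and track the floor functions so that P\'osa's degree condition holds exactly up to the stated cubic-in-$\Delta$ bound on $|V(G)|$, while correctly accounting for the empty classes in the degree count and for the small-$\Delta$, small-$L$ boundary cases.
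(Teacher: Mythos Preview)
Your proposal is correct and is exactly the paper's approach: the paper's proof consists of the single line ``This follows from Lemma~\ref{equiv},'' and you have simply unwound that lemma by applying Theorem~\ref{main} to the pair $(G^2,G)$ and checking its hypotheses via Fact~\ref{degree}. Your additional sketch of how Theorem~\ref{main} itself is proved also matches the paper's argument.
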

\begin{proof} This follows from Lemma \ref{equiv}.\end{proof}

\begin{cor}\label{maincor}
Let $G$ be a graph of with $\Delta = \Delta(G) \geq 1$. Then $\lambda_{2,1}(G) \leq \Delta^2$ if 

$$|V(G)| \leq \left(\left\lfloor \frac{\Delta}{2} \right\rfloor + 1 \right)\left(\Delta^2 - \Delta+ 1\right) - 1.$$

\end{cor}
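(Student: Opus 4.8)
The plan is to obtain Corollary \ref{maincor} as the special case $L = \Delta^2 + 1$ of Corollary \ref{lambdacor}. So the first step is simply to set $L := \Delta^2 + 1$ and check that the hypotheses of Corollary \ref{lambdacor} are met: we are given $\Delta = \Delta(G) \geq 1$, and with this choice $L \geq \Delta^2 + 1$ holds with equality, so there is nothing more to verify.

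Next I would substitute $L = \Delta^2 + 1$ into the size bound of Corollary \ref{lambdacor} and simplify the three pieces. We have $L - 1 = \Delta^2$, so the conclusion ``$\lambda_{2,1}(G) \leq L-1$'' becomes exactly ``$\lambda_{2,1}(G) \leq \Delta^2$''. We have $L - \Delta = \Delta^2 - \Delta + 1$, matching the second factor. Finally, since $\Delta \geq 1$,
$$\left\lfloor \frac{L-1}{2\Delta} \right\rfloor + 1 = \left\lfloor \frac{\Delta^2}{2\Delta} \right\rfloor + 1 = \left\lfloor \frac{\Delta}{2} \right\rfloor + 1,$$
the middle equality being immediate because $\Delta^2/(2\Delta) = \Delta/2$ on the nose. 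Hence the hypothesis $|V(G)| \leq (L-\Delta)\big(\lfloor (L-1)/(2\Delta)\rfloor + 1\big) - 1$ of Corollary \ref{lambdacor} is precisely the hypothesis $|V(G)| \leq (\lfloor \Delta/2 \rfloor + 1)(\Delta^2 - \Delta + 1) - 1$ of the present statement, and the desired conclusion follows at once.

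There is essentially no obstacle here; the only point requiring the slightest care is the floor identity $\lfloor \Delta^2/(2\Delta)\rfloor = \lfloor \Delta/2 \rfloor$, which is trivial once one notes the quotient is $\Delta/2$ exactly. (One might also remark that for $\Delta = 1$ the bound reads $|V(G)| \leq 0$, so the statement is vacuous there, consistent with Corollary \ref{lambdacor}.) Everything of substance has already been done in Theorem \ref{main} and Corollary \ref{lambdacor}; Corollary \ref{maincor} is just the instantiation that makes the connection to the $\Delta^2$ Conjecture explicit.
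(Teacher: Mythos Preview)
Your proposal is correct and matches the paper's own proof exactly: the paper simply applies Corollary \ref{lambdacor} with $L = \Delta^2 + 1$. Your added verification of the floor identity and the remark about the $\Delta = 1$ case are fine elaborations, but nothing beyond the direct substitution is required.
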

\begin{proof}
Using Corollary \ref{lambdacor} with $L = \Delta^2 + 1$ gives the desired result. \end{proof}

Corollary \ref{maincor} significantly expands the known orders of graphs that satisfy the $\Delta^2$ Conjecture; it does so more dramatically as $\Delta(G)$ increases. For $\Delta(G) = 3$, $|V(G)| \leq 13$ suffices as opposed to the previously known $|V(G)|\leq10$ \cite{griggs92}. For $\Delta(G) = 4$, we have $|V(G)| \leq 38$ as opposed to $|V(G)| \leq 17$ \cite{griggs92}. If $G$ is the Hoffman-Singleton graph, then $\Delta(G) = 7$, $|V(G)| = 50 = \Delta^2 + 1$, and in fact $\lambda_{2,1}(G) = 49 = \Delta^2$ \cite{griggs92}. It might seem productive to look among minor variations of the Hoffman-Singleton graph for counterexamples to the $\Delta^2$ Conjecture, but Corollary \ref{maincor} suggests otherwise - the conjecture holds if $\Delta(G) = 7$ and $|V(G)| \leq 169$.  The bounds on $|V(G)|$ established in Corollary \ref{maincor} grow quickly with $\Delta$, as they are cubic in $\Delta$ rather than quadratic as in \cite{griggs92}. \\

For some $|V(G)|$, we can also use Theorem \ref{main} to find stronger upper bounds on $\lambda_{2,1}(G)$ than the best known bound of Gon\c{c}alves  \cite{gon2005}. The bound on $|V(G)|$ in the following corollary is larger than the bound in Corollary \ref{lambdacor}.

\begin{cor} Let $G$ be a graph with $\Delta = \Delta(G) \geq 3$. Then $\lambda_{2,1}(G) < \Delta^2 + \Delta - 2$ if

$$|V(G)| \leq \left(\left\lfloor \frac{\Delta}{2} \right\rfloor + 1 \right)\left(\Delta^2 - 2\right) - 1.$$

\end{cor}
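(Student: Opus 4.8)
The plan is to derive this corollary directly from Corollary \ref{lambdacor} (equivalently, from Theorem \ref{main} via Lemma \ref{equiv}) by picking the parameter $L$ so that the conclusion $\lambda_{2,1}(G) \leq L-1$ becomes exactly the desired strict bound. Since $\lambda_{2,1}(G) < \Delta^2 + \Delta - 2$ is the same as $\lambda_{2,1}(G) \leq \Delta^2 + \Delta - 3$, I would set
$$L = \Delta^2 + \Delta - 2,$$
so that $L - 1 = \Delta^2 + \Delta - 3$. The first thing to check is the hypothesis $L \geq \Delta^2 + 1$ of Corollary \ref{lambdacor}: this reads $\Delta^2 + \Delta - 2 \geq \Delta^2 + 1$, i.e. $\Delta \geq 3$, which is assumed.

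It then remains to show that the size bound assumed here implies the size bound $|V(G)| \leq (L-\Delta)\bigl(\lfloor (L-1)/(2\Delta)\rfloor + 1\bigr) - 1$ demanded by Corollary \ref{lambdacor}. With $L = \Delta^2 + \Delta - 2$ one has $L - \Delta = \Delta^2 - 2$, which already matches the first factor in the statement. So the argument reduces to the single inequality
$$\left\lfloor \frac{L-1}{2\Delta}\right\rfloor \geq \left\lfloor \frac{\Delta}{2}\right\rfloor ;$$
granting this, we get $(L-\Delta)\bigl(\lfloor (L-1)/(2\Delta)\rfloor + 1\bigr) - 1 \geq (\Delta^2 - 2)\bigl(\lfloor \Delta/2\rfloor + 1\bigr) - 1$, and the hypothesis $|V(G)| \leq (\lfloor \Delta/2\rfloor + 1)(\Delta^2 - 2) - 1$ then lets Corollary \ref{lambdacor} close the proof.

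For the floor comparison I would invoke monotonicity of $\lfloor\cdot\rfloor$, so that it is enough to verify $(L-1)/(2\Delta) \geq \Delta/2$. Substituting $L - 1 = \Delta^2 + \Delta - 3$, this becomes
$$\frac{\Delta^2 + \Delta - 3}{2\Delta} = \frac{\Delta}{2} + \frac{1}{2} - \frac{3}{2\Delta} \geq \frac{\Delta}{2},$$
which holds exactly when $\Delta \geq 3$. I do not expect a genuine obstacle; the only point needing a little care is the boundary case $\Delta = 3$, where $\tfrac12 - \tfrac{3}{2\Delta} = 0$ and the floor estimate is tight ($\lfloor 9/6\rfloor = \lfloor 3/2\rfloor = 1$). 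It is also worth noting that the factor $\Delta^2 - 2$ (rather than $\Delta^2 - \Delta + 1$ as in Corollary \ref{maincor}) is exactly what results from choosing $L = \Delta^2 + \Delta - 2$ instead of $L = \Delta^2 + 1$, and this is the range of orders on which Theorem \ref{main} beats Gon\c{c}alves' bound $\Delta^2 + \Delta - 2$.
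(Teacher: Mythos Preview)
Your proposal is correct and matches the paper's own proof: both set $L = \Delta^2 + \Delta - 2$, note that $L - \Delta = \Delta^2 - 2$, and reduce the floor term by writing $(L-1)/(2\Delta) = \Delta/2 + 1/2 - 3/(2\Delta)$ and using $\Delta \geq 3$. The only cosmetic difference is that the paper asserts the equality $\lfloor \Delta/2 + 1/2 - 3/(2\Delta)\rfloor = \lfloor \Delta/2 \rfloor$, whereas you only use the inequality $\geq$, which is all that is needed.
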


\begin{proof}
Apply Corollary \ref{lambdacor} with $L = \Delta^2 + \Delta - 2$. This gives 
$$|V(G)| \leq \left(\left\lfloor \frac{\Delta}{2} + \frac{1}{2} -\frac{ 3}{2\Delta} \right\rfloor + 1 \right)\left(\Delta^2 - 2\right) - 1.$$

\noindent Since we have assumed $\Delta \geq 3,$ we have $0\leq  {1}/{2} -{ 3}/{(2\Delta)} < 1/2$, so 

$$\left\lfloor \frac{\Delta}{2} + \frac{1}{2} -\frac{ 3}{2\Delta} \right\rfloor = \left\lfloor \frac{\Delta}{2} \right\rfloor.$$ \end{proof}

For comparison, this bound is the same as Corollary \ref{maincor} for $\Delta = 3$. For $\Delta = 4$, we get $|V(G)| \leq 41$ as opposed to 38, and for $\Delta = 7$ we get $|V(G)| \leq 187$ as opposed to 169. \\

We now proceed to the proof of Theorem \ref{main}.

\begin{proof} 

Let $L$ be as in Theorem \ref{main}. We will show that for any integers $q \geq 0$, $0\leq r\leq L-1$ with

$$Lq + r \leq M =  (L - \Delta)\left(\left\lfloor\frac{L-1}{2\Delta} \right\rfloor + 1\right) - 1,$$

if $|V(G)| = Lq + r$, $\Delta(H) = \Delta$, and $\Delta(G) \leq \Delta^2$ then $(G, H)$ has a $(G,H)$-labeling with span at most $L-1$. This is sufficient to prove Theorem \ref{main}, as for any integer $n$ there exist unique integers $q\geq 0$ and $r \in \{0, ..., L-1\}$ with $Lq + r = n$. Suppose $|V(G)| = Lq+r$.  Recall that $L \geq \Delta^2 + 1 \geq \Delta(G) + 1$. By the Szemer\'edi-Hajnal theorem, $G$ has an equitable coloring $C$ with $L$ color classes. For convenience we will use all $L$ color classes even if several are empty.  This means $L - r$ classes have $q$ vertices and $r$ classes have $q + 1$ vertices. Our goal is to prove that the complement of the color adjacency graph of $(C,G,H)$, or $\mathcal{CGH}^c$, has a hamilton path. Note that $d_{\mathcal{CGH}}(V) \leq \Delta |V|$ for all $V \in V(\mathcal{CGH})$. Write the degree of $V$ in $\mathcal{CGH}^c$ as $d_c(V)$.\\
\begin{itemize}
\item[Case 1:] $q \leq \lfloor(L-1)/2\Delta \rfloor - 1$. \\
Then 
$$\Delta(q +1) \leq  \Delta \left\lfloor\frac{L-1}{2\Delta} \right\rfloor \leq  \left\lfloor\frac{L-1}{2}\right\rfloor$$

so that $\delta(\mathcal{CGH}^c) \geq L-1 - \lfloor(L-1)/2\rfloor \geq (L-1)/2$, and the conditions of Corollary \ref{posacor} are satisfied. Therefore $\mathcal{CGH}^c$ has a hamilton path.\\

\item [Case 2:] $q = \lfloor(L-1)/2\Delta \rfloor$.\\
Now $M = (L - \Delta)(q + 1) - 1$.
$r$ can only be as big as $M - Lq$, which is 
\begin{eqnarray*} Lq + L  - \Delta (q + 1) - 1 - Lq \\
  = L - 1 - \Delta(q + 1) \\
  = L - 1 - \Delta\left(\left\lfloor\frac{L-1}{2\Delta} \right\rfloor + 1\right) \geq 0.
 \end{eqnarray*}


Now suppose $k$ is an integer with $0 \leq k \leq (L - 2)/2$ as in Corollary \ref{posacor}. If $d_c(V) \leq k$, then 

$$\frac{L - 2}{2} \geq L- 1 - d_{\mathcal{CGH}}(V) \geq L- 1 - \Delta |V|, $$\\
\noindent so that $|V| \geq (1/\Delta)(L - 1 - (L - 2)/2) = (L - 1)/2\Delta + 1/2\Delta>q$. Therefore $|V| = q + 1$, so we know there are at most $r$ vertices with $d_c(V) \leq k$. For any such vertex $V$, 

$$d_c(V) \geq  L- 1 - (q + 1) \Delta = L - 1 - \Delta\left(\left\lfloor\frac{L-1}{2\Delta} \right\rfloor + 1\right) \geq r \geq 0.$$

\noindent Now the conditions of Corollary \ref{posacor} are satisfied, so $\mathcal{CGH}^c$ has a hamilton path.
 As

$$Lq + L - 1 - \Delta\left(\left\lfloor\frac{L-1}{2\Delta} \right\rfloor + 1\right)  = (L - \Delta)\left(\left\lfloor\frac{L-1}{2\Delta} \right\rfloor + 1\right) - 1 = M,$$

\noindent this argument works for any $|V(G)| \leq M$. \end{itemize}
From Lemma \ref{ham}, $\mathcal{CGH}^c$ having a hamilton path implies that $(G, H)$ has an $(G,H)$-labeling with $Span(G, H) = L-1$. \end{proof}

Some authors have been concerned with finding labelings that are surjective on a set of integers $\{0, ..., t\}$ \cite{georges2005structure}. Such labelings are said to be no-hole.

\begin{cor}
Let $G$ be a graph of order $n$ with $\Delta = \Delta(G) \geq 1$, and let $L$ be an integer with $L\geq \Delta^2 + 1$. If 

$$n \leq (L - \Delta)\left(\left\lfloor\frac{L-1}{2\Delta} \right\rfloor + 1\right) - 1,$$
then there is an $L(2,1)$-labeling of $G$ with a span at most $L-1$ that is equitable. If $n\geq L$, the labeling is no-hole.

\end{cor}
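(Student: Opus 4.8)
The plan is to obtain the corollary by simply tracking two extra properties of the labeling already produced in the proof of Theorem \ref{main}; no new argument is needed beyond light bookkeeping. First I would recall the construction. By Lemma \ref{equiv} it suffices to work with the pair $(G^2, G)$, for which $\Delta(G^2) \le \Delta(G)^2 = \Delta^2 \le L-1$ by Fact \ref{degree}, and the $L(2,1)$-labeling of $G$ furnished by Corollary \ref{lambdacor} is precisely the $(G^2, G)$-labeling built inside the proof of Theorem \ref{main}. That labeling is assembled from (a) an equitable coloring $C = \{C_0, \dots, C_{L-1}\}$ of $G^2$ with exactly $L$ classes, supplied by the Hajnal--Szemer\'{e}di theorem (Theorem \ref{hajnal}) since $\Delta(G^2) \le L-1$, and (b) a hamilton path $p_0 p_1 \cdots p_{L-1}$ in $\mathcal{CGH}^c$; the label assigned to a vertex $v$ is the unique index $i$ with $v \in p_i$.

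Next I would verify equitability. Write $n = |V(G)| = Lq + r$ with $0 \le r \le L-1$; then the Hajnal--Szemer\'{e}di coloring consists of $r$ classes of size $q+1$ and $L-r$ classes of size $q$. Since the hamilton path induces a bijection between the color classes and the label set $\{0, 1, \dots, L-1\}$, the label class $f^{-1}(i)$ coincides with the color class $p_i$ for every $i$. Hence the multiset of sizes of $f^{-1}(0), \dots, f^{-1}(L-1)$ agrees with the multiset of sizes of the color classes, so any two label classes differ in size by at most one; that is, $f$ is equitable. (In the case $n < L$ one has $q = 0$, so $f$ is injective, and equitability is immediate.)

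Then I would handle the no-hole conclusion. If $n \ge L$, then $q = \lfloor n/L \rfloor \ge 1$, so every color class, and therefore every label class $f^{-1}(i)$ with $0 \le i \le L-1$, is nonempty. Consequently $f$ is surjective onto $\{0, 1, \dots, L-1\}$: its span is exactly $L-1$ and it has no holes.

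The only point requiring care is making sure the construction of Theorem \ref{main} is being invoked with exactly $L$ color classes and that passing from the coloring to the hamilton-path labeling never changes a class size. Both are true essentially by inspection: the proof of Theorem \ref{main} explicitly retains all $L$ classes even when some are empty, and relabeling classes along the hamilton path is a pure permutation of classes. So I do not expect a genuine obstacle here; the corollary drops out of Theorem \ref{main} and Corollary \ref{lambdacor} once these observations are recorded.
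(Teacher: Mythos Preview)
Your proposal is correct and is exactly the approach the paper has in mind: the paper's own proof reads ``The proof follows immediately from the proof of Theorem \ref{main},'' and what you have written is precisely the bookkeeping that justifies that sentence. The two observations you isolate---that the Hajnal--Szemer\'edi step produces $L$ classes whose sizes differ by at most one, and that the hamilton-path relabeling merely permutes those classes (so equitability is preserved and, when $n\ge L$, every label is used)---are the only content behind the corollary.
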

\begin{proof}
The proof follows immediately from the proof of Theorem \ref{main}.
\end{proof}

The next corollary concerns algorithms involved in finding these labelings. 
\begin{cor}

Let $G$ be a graph of order $n$ with $\Delta = \Delta(H) \geq 1$, $\Delta(G) \leq \Delta(H)^2$, and $L\geq \Delta^2 + 1$. There is an algorithm with polynomial running time in $n$ to compute an $(G ,H)$-labeling of with span at most $L-1$ for all $n$ and $L$ such that 
$$n \leq (L - \Delta)\left(\left\lfloor\frac{L-1}{2\Delta} \right\rfloor + 1\right) - 1.$$
\end{cor}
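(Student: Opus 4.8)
The plan is to note that correctness is already guaranteed by Theorem \ref{main} and its proof, so the only thing the corollary adds is the polynomial running time; one then checks that each ingredient of that proof — an equitable coloring, a Hamilton path in $\mathcal{CGH}^c$, and the read-off of the labeling — can be produced in time polynomial in $n$, after first disposing of the degenerate case in which $L$ is so large that one cannot even afford to list $L$ color classes.

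First I would reduce to the case $L=O(n^2)$. Since $H\subseteq G$ we have $\Delta=\Delta(H)\le\Delta(G)\le n-1$, so $\Delta^2+2\Delta\le (n-1)^2+2(n-1)$ is polynomial in $n$. If $L-1\ge\Delta^2+2\Delta$, then the first-fit technique behind the original Griggs--Yeh bound (\cite{griggs92}), which uses only degree bounds, applies to the $(G,H)$-problem: process the vertices in any order and give each vertex the least label not forbidden by an already labelled neighbour, where an $H$-neighbour forbids three labels and a $(G\setminus H)$-neighbour forbids one. This runs in $O(n^2)$ time and yields a $(G,H)$-labeling of span at most $\Delta^2+2\Delta$, since each $v$ has at most $2d_H(v)+d_G(v)\le 2\Delta+\Delta^2$ labels forbidden when it is processed; as $\Delta^2+2\Delta\le L-1$, such a labeling suffices. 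Hence we may assume $\Delta^2+1\le L\le\Delta^2+2\Delta$, so $L$ is polynomial in $n$ and $L$ color classes can be handled explicitly.

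Next I would run the proof of Theorem \ref{main} step by step, making each step effective. (1) An equitable $L$-coloring $C$ of $G$, as used there, can be found in time polynomial in $n$ by the algorithmic form of the Hajnal--Szemer\'edi theorem (\cite{kierstead}). (2) From $C$, $G$ and $H$ one builds $\mathcal{CGH}$ and its complement in $O(L^2+|E(G)|)$ time. (3) One must exhibit a Hamilton path in $\mathcal{CGH}^c$; this is the only step with real content, since finding a Hamilton path is NP-hard in general, and it is made polynomial by using the degree inequalities the two cases of the proof of Theorem \ref{main} establish. Add a dominating vertex $D$ to $\mathcal{CGH}^c$ as in Corollary \ref{posacor}; then $\mathcal{CGH}^c+D$ satisfies the hypothesis of P\'osa's theorem (Theorem \ref{posathm}) — this is exactly what those two cases verify — so its Bondy--Chv\'atal closure is complete. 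Run the constructive version: take a Hamilton cycle of the complete closure and undo the closure one edge at a time, repairing the current Hamilton cycle, whenever it uses the edge being removed, by the rotation in the proof of Theorem \ref{bondy} (which converts a Hamilton cycle of $\Gamma+uv$ into one of $\Gamma$ in $O(|V(\Gamma)|)$ time once $d(u)+d(v)\ge|V(\Gamma)|$). This ends with a Hamilton cycle of $\mathcal{CGH}^c+D$, and deleting $D$ leaves a Hamilton path of $\mathcal{CGH}^c$. (Alternatively, run the standard P\'osa rotation--extension procedure directly on $\mathcal{CGH}^c+D$; it terminates in polynomial time with a Hamilton cycle precisely because the P\'osa condition holds.) (4) Finally, the backward direction of the lemma relating $Span(G,H)$ to Hamilton paths of $\mathcal{CGH}^c$ is itself an algorithm: given the path $p_0,\ldots,p_{L-1}$, output the labeling sending each $v$ to the index $i$ with $v\in p_i$.

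The main obstacle is step (3): one must take care that the passage from the graph to its Bondy--Chv\'atal closure in the proof of P\'osa's theorem — written as a non-effective argument by contradiction — is replaced by the explicit edge-by-edge repair above (or by rotation--extension), and that the degree-sum hypotheses these repairs require are supplied, at each step and in each case, by the estimates already carried out in the proof of Theorem \ref{main}. Everything else — the cited algorithmic equitable-coloring result, the construction of $\mathcal{CGH}^c$, and the read-off — is routine, and the reduction in the second paragraph is needed only so that the running time is genuinely polynomial in $n$ even when the prescribed $L$ is astronomically larger than necessary.
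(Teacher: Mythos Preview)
Your proposal is correct and follows essentially the same route as the paper: dispose of the degenerate large-$L$ case, then use the algorithmic Hajnal--Szemer\'edi theorem of \cite{kierstead} for the equitable coloring, add a dominating vertex to $\mathcal{CGH}^c$, find a Hamilton cycle in the resulting graph constructively (the paper simply cites \cite{bondy76}, noting that P\'osa's condition implies Chv\'atal's, while you spell out the closure-repair argument), and read off the labeling.

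The one place you work harder than needed is the degenerate case. The paper handles $L\ge 2n+1$ by the trivial labeling $0,2,4,\ldots,2(n-1)$, which has span $2n-2\le L-1$ regardless of the structure of $G$ and $H$; your first-fit argument with the bound $2d_H(v)+d_G(v)\le 2\Delta+\Delta^2$ is correct but unnecessary. Either way one reduces to $L=O(n)$ (or $O(n^2)$), which is all that is required for the Hamilton-cycle step to be polynomial in $n$.
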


\begin{proof}
If $L \geq 2n+1$, the appropriate labeling can be obtained by labeling the vertices $0, 2, ..., 2n$ in any order \cite{griggs92}. This can clearly be done in polynomial time. Otherwise, in \cite{kierstead} there is shown to be an algorithm polynomial in $n$ to equitably color $G$ with $L$ colors. Degree sequences satisfying the conditions of P\'{o}sa's Theorem also satisfy those of Chv\'{a}tal's Theorem \cite{bondy76}, and the paper's authors exhibit an algorithm polynomial in $p$ to find hamilton cycles in graphs of order $p$ which satisfy the conditions of Chv\'{a}tal's Theorem. From the proof of Lemma \ref{ham} and of Corollary \ref{posacor}, we see that to find the labeling it is enough find a hamilton cycle in a certain graph, namely $\mathcal{CGH}^c$ with a dominating vertex added, of order $L + 1 \leq 2n + 2$ that satisfies the conditions of P\'{o}sa's Theorem. From \cite{bondy76}, we can do this with an algorithm polynomial in $L + 1$, which must also be polynomial in $n$. These two algorithms in succession yield the desired algorithm. \end{proof}

\section{Comments on Diameter Two Graphs}\label{tightnesscomments}

As mentioned in Section \ref{back}, we have the following result:

\begin{thm}[Griggs, Yeh \cite{griggs92}]\label{diam} The $\Delta^2$ Conjecture holds for diameter two graphs.
In addition, $\lambda_{2,1} \leq \Delta^2 - 1$ for diameter two graphs with $\Delta \geq 2$ except for $C_3$, $C_4$ and the Moore Graphs. For these exceptional graphs, $\lambda_{2,1} = \Delta^2$. \end{thm}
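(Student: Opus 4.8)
The plan is to prove the two parts of Theorem~\ref{diam} separately, since the first is a pure upper bound and the second is a finer dichotomy with an exact classification of the extremal cases.

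For the upper bound $\lambda_{2,1}(G) \le \Delta^2$ when $\mathrm{diam}(G) = 2$, the key observation is that in a diameter-two graph every pair of distinct vertices is at distance $1$ or $2$, so the $L(2,1)$-conditions force \emph{all} labels to be distinct. Hence an $L(2,1)$-labeling with span $|V(G)|-1$ is exactly an injective one, and by Lemma~\ref{hamgrigg} such a labeling exists if and only if $G^c$ has a hamilton path. So the first step is to bound $|V(G)|$: a diameter-two graph on a vertex $v$ of maximum degree $\Delta$ has at most $\Delta + \Delta(\Delta-1) + 1 = \Delta^2 + 1$ vertices (the same count as in Fact~\ref{degree}, plus $v$ itself). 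Thus $\lambda_{2,1}(G) \le |V(G)| - 1 \le \Delta^2$ \emph{provided} $G^c$ has a hamilton path. The second step is to produce that hamilton path, which I would do with P\'osa's corollary (Corollary~\ref{posacor}): a vertex of degree $d$ in $G$ has degree $|V(G)| - 1 - d \ge |V(G)| - 1 - \Delta$ in $G^c$, so if $|V(G)| - 1 - \Delta \ge (|V(G)|-2)/2$, i.e. $|V(G)| \le 2\Delta$... which is generally false. So this crude argument is \emph{not} enough, and the real content is sharper: one must either verify P\'osa's degree-sequence condition using the structure of diameter-two graphs (low-degree vertices in $G$ are high-degree in $G^c$, and there cannot be too many of each degree because of the distance-two reachability constraint), or fall back on the weaker target. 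Actually the cleanest route, and the one I expect the authors take, is: if $|V(G)| \le \Delta^2$ then even a non-injective labeling into $\{0,1,\dots,\Delta^2\}$ may be needed, but since all labels must be distinct we genuinely need $G^c$ hamiltonian-path; when $|V(G)| = \Delta^2 + 1$ we are forced to use all of $\{0,\dots,\Delta^2\}$ and need $G^c$ to have a hamilton path through $\Delta^2+1$ vertices with one designated endpoint-free — here one uses that $G$ is $\Delta$-regular (a diameter-two graph of maximum order is regular), so $G^c$ is $(\Delta^2-\Delta)$-regular on $\Delta^2+1$ vertices, and $2(\Delta^2 - \Delta) \ge \Delta^2 + 1$ for $\Delta \ge 2$, so $G^c$ is hamiltonian by Ore/Chv\'atal (Theorem~\ref{bondy}) — giving the stronger conclusion $\lambda_{2,1} = \Delta^2$ exactly in that case.

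For the refined statement — $\lambda_{2,1}(G) \le \Delta^2 - 1$ for diameter-two $G$ with $\Delta \ge 2$ \emph{except} $C_3, C_4$, and the Moore graphs — I would argue as follows. If $|V(G)| \le \Delta^2$, then we want an injective labeling into $\{0,\dots,\Delta^2-1\}$, equivalently $G^c$ has a hamilton path; since $G^c$ has minimum degree $\ge |V(G)| - 1 - \Delta \ge \Delta^2 - 1 - \Delta$, and $|V(G)| \le \Delta^2$, one checks $2(\Delta^2 - 1 - \Delta) \ge \Delta^2 - 2 = |V(G)| - 2$ for $\Delta \ge 3$ (and handles $\Delta = 2$, where diameter-two graphs are $C_3, C_4, C_5$, by hand), so by Theorem~\ref{bondy}/P\'osa $G^c$ is hamiltonian-connected enough to contain a hamilton path. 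The remaining case is $|V(G)| = \Delta^2 + 1$, i.e. $G$ is a Moore graph (diameter two, girth five, $\Delta$-regular — the Hoffman--Singleton classification \cite{hoffman} says $\Delta \in \{2,3,7,57\}$, giving $C_5$, Petersen, Hoffman--Singleton, and the hypothetical $57$-graph); for these all $\Delta^2 + 1$ labels are needed so $\lambda_{2,1} = \Delta^2$, and this is where the exceptions come from. The graphs $C_3$ ($\Delta = 2$, three vertices all mutually adjacent, forcing labels $\{0,2,4\}$) and $C_4$ ($\Delta = 2$, diameter two, four vertices, needing $\{0,2,?,?\}$ — one checks span $4$ is forced) are the small sporadic cases where the order is below $\Delta^2+1$ yet the bound $\Delta^2 - 1$ still fails.

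The main obstacle is the hamilton-path verification in $G^c$: the naive minimum-degree bound on $G^c$ is $|V(G)|-1-\Delta$, which beats the P\'osa/Chv\'atal threshold $\tfrac12(|V(G)|-2)$ only when $|V(G)| \lesssim 2\Delta$, far short of the needed range up to $\Delta^2+1$. To close this gap one must exploit that a diameter-two graph cannot have many low-degree vertices — precisely, a vertex of degree $k$ sees all other vertices within distance two, which constrains $|V(G)|$ and the degree sequence simultaneously — turning the P\'osa condition $|\{v : d_{G^c}(v) \le k\}| \le k$ into a statement about how the \emph{high}-degree vertices of $G$ are distributed. Pinning down exactly when this fails, and matching those failures to $C_3$, $C_4$, and the Moore graphs, is the delicate part; everything else is counting.
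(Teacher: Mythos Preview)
Your overall architecture is right --- injectivity of labels, Lemma~\ref{hamgrigg}, P\'osa on $G^c$, and the Moore-graph classification at order $\Delta^2+1$ --- but there is a sign error that propagates through the whole proposal and leaves a real gap. The inequality $|V(G)|-1-\Delta \ge (|V(G)|-2)/2$ is equivalent to $|V(G)|\ge 2\Delta$, not $|V(G)|\le 2\Delta$. So the naive minimum-degree bound on $G^c$ already suffices for Corollary~\ref{posacor} throughout the \emph{large} range $2\Delta+1\le |V(G)|\le \Delta^2+1$; that is precisely the paper's Case~2, and it immediately gives $\lambda_{2,1}(G)\le |V(G)|-1\le \Delta^2$, with equality only at the Moore graphs. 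Your concluding paragraph has the hard and easy cases reversed.

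The genuine gap is therefore the \emph{small} range $|V(G)|\le 2\Delta$, where P\'osa on $G^c$ gives nothing and your suggested fix (``exploit that a diameter-two graph cannot have many low-degree vertices'') is not developed and in fact is not how the paper proceeds. The paper abandons hamilton paths entirely in this range and instead invokes Lemma~\ref{chrom}, $\lambda_{2,1}(G)\le |V(G)|+\chi(G)-2$, together with Brooks' Theorem. For $\Delta\ge 3$ this gives $\lambda_{2,1}(G)\le 2\Delta+\Delta-2=3\Delta-2\le \Delta^2-2$ (complete graphs are handled separately with $\lambda_{2,1}=2\Delta$), and $\Delta=2$ is checked by hand. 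This two-case split --- Lemma~\ref{chrom}$+$Brooks for $|V(G)|\le 2\Delta$, hamilton path in $G^c$ for $|V(G)|\ge 2\Delta+1$ --- is the missing idea in your proposal.
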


The proof of these facts rely on Lemma \ref{ham}, Brooks' Theorem, and another result from Griggs and Yeh. 

\begin{thm}[Brooks \cite{lovasz1975three}]$\chi(G) \leq \Delta + 1$, and $\chi(G) \leq \Delta$ unless $G$ is an odd cycle or a complete graph.
\end{thm}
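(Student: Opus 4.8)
The plan is to carry out the short argument of Lov\'{a}sz. The bound $\chi(G)\le\Delta+1$ is immediate from first-fit (greedy) colouring in an arbitrary vertex ordering, since when a vertex is reached it has at most $\Delta$ already-coloured neighbours, so one of $\Delta+1$ colours is free. For the refinement we may assume $G$ is connected, colouring components separately (a disjoint union that is an odd cycle or a complete graph must be a single such component), and we may assume $\Delta\ge 3$: if $\Delta\le 2$ a connected $G$ is a path, an even cycle, or an odd cycle, and only the last is exceptional while the first two have $\chi(G)\le 2\le\Delta$ (or $\chi(G)\le\Delta$ trivially when $\Delta\le 1$). So the work is to show that a connected $G$ with $\Delta\ge 3$ and $G\ne K_{\Delta+1}$ admits a proper $\Delta$-colouring.

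First the case where $G$ is \emph{not} $\Delta$-regular. Choose $v$ with $d(v)<\Delta$, take a spanning tree rooted at $v$, and order $V(G)=\{v_1,\dots,v_n\}$ so that $v_n=v$ and every $v_i$ with $i<n$ precedes its parent; then each $v_i$ with $i<n$ has at most $\Delta-1$ earlier neighbours and $v_n=v$ has at most $\Delta-1$ neighbours in all, so first-fit with $\{1,\dots,\Delta\}$ succeeds. For $G$ $\Delta$-regular with a cut vertex, I would decompose into blocks and induct on $|V(G)|$: every block has maximum degree at most $\Delta$; a block equal to $K_{\Delta+1}$ would force a cut vertex of $G$ lying in it to have degree exceeding $\Delta$, which is impossible, an odd-cycle block needs only $3\le\Delta$ colours, and any other block is $\Delta$-colourable by induction, so colourings of the blocks can be permuted to agree at the cut vertices and glued.

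This leaves $G$ $2$-connected, $\Delta$-regular, $\Delta\ge 3$, $G\ne K_{\Delta+1}$, which is the heart of the matter. The key structural claim is that such a $G$ has a vertex $v$ with two neighbours $a,b$ that are non-adjacent and with $G-\{a,b\}$ connected. Granting it, list $V(G)$ as $v_1=a$, $v_2=b$, and then $v_3,\dots,v_n$ by running a search from $v=v_n$ inside the connected graph $G-\{a,b\}$ and reversing, so that every $v_i$ with $3\le i\le n-1$ has a later neighbour. First-fit with $\{1,\dots,\Delta\}$ then wins: $a$ and $b$ both take colour $1$, which is legal since $a\not\sim b$; each interior $v_i$ has at most $\Delta-1$ earlier neighbours; and although $v_n=v$ has all $\Delta$ of its neighbours coloured before it, two of them ($a$ and $b$) share colour $1$, so $N(v)$ shows at most $\Delta-1$ colours and one is free.

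The main obstacle is the structural claim, which I would prove by cases on $\kappa(G)$. If $\kappa(G)\ge 3$: since $G\ne K_{\Delta+1}$ is connected and $\Delta$-regular, some vertex $v$ has two non-adjacent neighbours $a,b$ (otherwise every closed neighbourhood is a clique, forcing $G=K_{\Delta+1}$), and $G-\{a,b\}$ is connected because $\kappa(G)\ge 3$. If $\kappa(G)=2$: pick a vertex $v$ lying in a $2$-element separator $\{v,w\}$, so that $w$ is a cut vertex of the connected graph $G-v$; then $v$ has a neighbour $a$ in one component $A$ of $G-v-w$ and a neighbour $b$ in another component $B$ (else $w$ would be a cut vertex of $G$), $a$ and $b$ are non-adjacent with common neighbour $v$, and the task is to choose the separator, the components, and the vertices $a,b$ so that $G-\{a,b\}$ is connected. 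This last verification — ensuring that deleting $a$ does not detach a piece of $A$ from $\{v,w\}$, and likewise for $b$ and $B$ — is the delicate point; the standard device is to take the separator and component minimising the fragment size, and this is where I would concentrate the effort.
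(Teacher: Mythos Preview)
The paper does not give a proof of Brooks' theorem; it merely states the result with a citation to Lov\'asz's 1975 argument and then invokes it as a black box in the proof of Theorem~\ref{diam}. Your proposal is precisely an outline of that Lov\'asz proof, so you are aligned with what the paper references rather than diverging from it.

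Your outline is correct, and the one spot you flag as unfinished --- choosing $a,b$ in the $\kappa(G)=2$ case so that $G-\{a,b\}$ remains connected --- is settled cleanly without any minimality device. With $\{v,w\}$ a $2$-cut, set $v_n=v$; then $G-v$ is connected and $w$ is a cut vertex of it, so the block--cut tree of $G-v$ has at least two end-blocks $B_1,B_2$ with respective cut vertices $c_1,c_2$. Since $G$ is $2$-connected, $v$ must have a neighbour $a\in B_1\setminus\{c_1\}$ and a neighbour $b\in B_2\setminus\{c_2\}$ (otherwise $c_i$ alone would be a cut vertex of $G$). The vertices $a$ and $b$ lie in distinct blocks of $G-v$ and neither is a cut vertex there, so they are non-adjacent and $(G-v)-a-b$ is still connected; finally $v$ has $\Delta\ge 3$ neighbours, hence at least one other than $a$ and $b$, so $G-\{a,b\}$ is connected. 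This fills the gap you identified.
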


\begin{lem}[Griggs, Yeh \cite{griggs92}]\label{chrom}
$\lambda_{2,1}(G) \leq |V(G)| + \chi(G) - 2.$

\end{lem}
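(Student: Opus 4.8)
# Proof Proposal for Lemma (Griggs–Yeh): $\lambda_{2,1}(G) \leq |V(G)| + \chi(G) - 2$

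The plan is to construct an explicit $L(2,1)$-labeling by first partitioning the vertices into independent sets via a proper coloring, and then spacing those color classes out along the integers so that the distance-one condition is met automatically. Let $n = |V(G)|$ and $\chi = \chi(G)$, and fix a proper coloring of $G$ with color classes $V_1, V_2, \dots, V_\chi$. Within each class, since the class is independent, no two vertices are adjacent, so there is no distance-one restriction between them; the only requirement is that distinct vertices receive distinct labels (the distance-two condition forces at least a difference of $1$). The idea is to assign to the vertices of $V_i$ a block of consecutive integers, but to insert a ``gap'' of one unused integer between the block for $V_i$ and the block for $V_{i+1}$. This gap guarantees that if $x \in V_i$ gets the top label of its block and $y \in V_{i+1}$ gets the bottom label of the next block, then $|f(x) - f(y)| \geq 2$, which handles the worst case for adjacent vertices in different classes.

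The key steps, in order, are as follows. First, order the color classes arbitrarily as $V_1, \dots, V_\chi$ and, for each $i$, list the vertices of $V_i$ in any order. Second, assign labels greedily: the vertices of $V_1$ receive $0, 1, \dots, |V_1| - 1$; then skip the integer $|V_1|$; the vertices of $V_2$ receive $|V_1| + 1, \dots, |V_1| + |V_2|$; skip one more integer; and so on, so that the vertices of $V_i$ occupy a block of $|V_i|$ consecutive integers, with exactly one skipped integer between consecutive blocks. Third, verify the two distance conditions: (a) adjacent vertices $xy \in E(G)$ lie in different color classes (since the coloring is proper), say $x \in V_i$, $y \in V_j$ with $i < j$; the blocks for $V_i$ and $V_j$ are separated by at least one skipped integer (exactly one if $j = i+1$, more if $j > i+1$), so $|f(x) - f(y)| \geq 2$; (b) any two distinct vertices receive distinct labels by construction, so the distance-two condition ($|f(x)-f(y)| \geq 1$) holds vacuously for distinct vertices. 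Fourth, compute the span: the labeling uses $n$ integers for the vertices plus $\chi - 1$ skipped integers, so the highest label is $n + (\chi - 1) - 1 = n + \chi - 2$ and the lowest is $0$, giving span exactly $n + \chi - 2$. Hence $\lambda_{2,1}(G) \leq n + \chi - 2$.

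I do not expect a genuine obstacle here — this is a short constructive argument. The one point requiring a moment's care is step (a): one must be sure that when $j > i + 1$ the separation between the blocks is at least $2$ as well, which follows since in that case there are at least two skipped integers (one between each pair of consecutive blocks in between) plus whatever vertices fill the intermediate blocks, so the gap is strictly larger than in the $j = i+1$ case. A second minor subtlety is the degenerate case $\chi = 1$, i.e. $G$ has no edges: then the bound reads $\lambda_{2,1}(G) \leq n - 1$, and the labeling $0, 1, \dots, n-1$ works since there are no adjacency constraints at all. With these cases noted, the proof is complete.
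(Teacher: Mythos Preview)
Your proof is correct; this is precisely the standard construction from the original Griggs--Yeh paper \cite{griggs92}. Note that the present paper does not actually supply a proof of this lemma --- it is stated with attribution and then used in the proof of Theorem~\ref{diam} --- so there is nothing to compare against beyond observing that your argument matches the cited source.
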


\begin{proof}[Proof of Theorem \ref{diam}]
If $\Delta = 2$, one can verify the theorem readily. 
If $\Delta \geq 3$, we can split into cases: suppose $\Delta \geq (|V(G)|)/2$. Lemma \ref{chrom} implies that $\lambda_{2,1}(G) \leq 2 \Delta + \chi(G) -2$. If $G$ is a complete graph, then clearly $\lambda_{2,1}(G) = 2\Delta(G)$. $G$ is not an odd cycle, as $\Delta \geq 3$. Otherwise, $2 \Delta + \chi(G) -2 \leq 3\Delta - 2$ by Brooks' Theorem. Note that in both cases, $\Delta(G) \geq 3$ implies that $\lambda_{2,1}(G) \leq \Delta^2 - 2$. \\

On the other hand, suppose $\Delta \leq (|V(G)| - 1)/2$. Then $\delta(G^c) \geq (|V(G)| - 1)/2$. Also, $G$ has diameter 2, so $|V(G)| \geq 3$. By Corollary \ref{posacor}, $G^c$ has a hamilton path. By Lemma \ref{hamgrigg}, there is an $L(2,1)$-labeling of $G$ with span $|V(G)|-1$. As the Moore graphs are the only diameter two graphs with $|V(G)| = \Delta^2 + 1$, Theorem \ref{diam} holds.\end{proof}

In fact, we can do better by the following result:

\begin{thm} [Erd\H{o}s, Fajtlowicz, Hoffman \cite{erdosdiam}]
Except $C_4$, there is no diameter two graph of order $\Delta^2$.
\end{thm}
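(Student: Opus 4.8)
The plan is to run the eigenvalue (Moore‑graph–style) argument, after first squeezing all the combinatorial rigidity out of the hypotheses. Write $\Delta=\Delta(G)$ and suppose $G$ has diameter two with $|V(G)|=\Delta^2$. Counting the vertices reachable from a fixed $v$ in at most two steps gives $\Delta^2-1=|V(G)|-1\le d(v)+d(v)(\Delta-1)=d(v)\,\Delta$, so $d(v)\ge \Delta-1/\Delta$, hence $d(v)=\Delta$ and $G$ is $\Delta$‑regular. Redoing the count for the regular graph: the $\Delta^2-\Delta-1$ vertices at distance two from $v$ each receive an edge from $N(v)$, while $N(v)$ emits exactly $\Delta(\Delta-1)-2e(N(v))$ edges, where $e(N(v))$ is the number of edges inside $N(v)$; comparing forces $e(N(v))=0$. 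So every neighbourhood is independent, i.e. $G$ is triangle‑free, and the single unit of slack in the count shows that exactly one vertex at distance two from $v$ — call it the antipode $\bar v$ — has two common neighbours with $v$, every other one having exactly one. The relation $w=\bar v$ is a fixed‑point‑free involution.

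Next I would pass to matrices. Let $A$ be the adjacency matrix, $J$ the all‑ones matrix, and $P$ the $\{0,1\}$ (symmetric, fixed‑point‑free) involution matrix with $P_{uw}=1$ iff $w=\bar u$. Because $G$ is triangle‑free of diameter two, $(A^2)_{uw}$ equals $\Delta$ on the diagonal, $0$ on adjacent pairs, and $1$ or $2$ on nonadjacent pairs (the value $2$ exactly on antipodal pairs), which gives the identity
$$A^2 + A + (1-\Delta)I = J + P.$$
Since $P^2=I$ and $\mathrm{tr}\,P=0$, the eigenvalues of $P$ are $\pm 1$ with equal multiplicities. As $G$ is connected and regular, the all‑ones vector spans the $\Delta$‑eigenspace of $A$, and on its orthogonal complement $J$ acts as $0$; so any eigenvector there with eigenvalue $\lambda$ satisfies $Pv=(\lambda^2+\lambda+1-\Delta)v$, forcing $\lambda^2+\lambda+1-\Delta\in\{1,-1\}$. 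Hence every non‑Perron eigenvalue of $A$ is a root of $\lambda^2+\lambda-\Delta$ or of $\lambda^2+\lambda+(2-\Delta)$, so $G$ has at most five distinct eigenvalues.

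Finally I would pin down $\Delta$ by integrality. Let $m_1,m_2$ be the multiplicities of the roots of $\lambda^2+\lambda-\Delta$ and $m_3,m_4$ those of $\lambda^2+\lambda+(2-\Delta)$. The four relations — sum of all multiplicities $=\Delta^2$, $\mathrm{tr}\,A=0$, $\mathrm{tr}\,A^2=\Delta^3$, and $\mathrm{tr}\,A^3=0$ (the last using triangle‑freeness) — over‑determine the $m_i$. If neither $4\Delta+1$ nor $4\Delta-7$ is a perfect square, Galois conjugacy forces $m_1=m_2$ and $m_3=m_4$, and already $\mathrm{tr}\,A=0$ together with the multiplicity count gives $\Delta^2=2\Delta+1$, impossible in integers. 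If exactly one of the two discriminants is a square, a short computation with all four relations produces either an impossible numeric identity or a half‑integer multiplicity (e.g. $\Delta^2/4-1/2$), again a contradiction. The two discriminants are simultaneously squares only at $\Delta=2$, and then $G$ is a $2$‑regular triangle‑free diameter‑two graph on four vertices, so $G=C_4$.

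I expect the last paragraph to be the main obstacle: one must carefully handle the degenerate regimes where the candidate eigenvalues turn rational (so conjugacy no longer equalises the $m_i$), and it is precisely the borderline $\Delta=2$ that survives the algebra and must be checked by hand — which is exactly why $C_4$ is the unique exception. A purely combinatorial alternative — carefully tracking edges among $N(v)$, $N(\bar v)$, and the remaining vertices — recovers all the structural facts above but, in my attempts, keeps returning consistent identities rather than a contradiction, so the spectral route appears to be the essential one.
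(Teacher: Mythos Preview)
The paper does not prove this theorem at all: it is quoted as a result of Erd\H{o}s, Fajtlowicz and Hoffman \cite{erdosdiam} and then immediately used, with no argument supplied. So there is no ``paper's own proof'' to compare against.

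For what it is worth, your outline is essentially the original Erd\H{o}s--Fajtlowicz--Hoffman argument, and the structural part is entirely correct: the counting forces $\Delta$-regularity, then triangle-freeness, then the antipodal involution and the identity $A^{2}+A+(1-\Delta)I=J+P$. One remark you could make explicit is that the fixed-point-free involution already forces $|V(G)|=\Delta^{2}$ to be even, hence $\Delta$ even; this, together with the eigenspace split $m_{1}+m_{2}=\Delta^{2}/2-1$, $m_{3}+m_{4}=\Delta^{2}/2$ coming from $\mathrm{tr}\,P=0$, kills several subcases instantly (for example, if $4\Delta+1$ is not a square then $m_{1}=m_{2}=\Delta^{2}/4-1/2$, which is never an integer). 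In the remaining subcase, $4\Delta+1=s^{2}$ with $4\Delta-7$ a nonsquare, the trace condition reduces to $s\mid 7$, so $\Delta=12$; this single value is then eliminated by $\mathrm{tr}\,A^{3}=0$. Your claim that both discriminants are squares only at $\Delta=2$ is correct, since two odd squares differing by $8$ must be $9$ and $1$. So the sketch is sound; the ``main obstacle'' you flag is genuinely only bookkeeping.
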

This and the proof of Theorem \ref{diam} imply the following theorem.
\begin{thm}With the exception of $C_3$, $C_4$, $C_5$ and the Moore Graphs, any diameter two graph with $\Delta(G) \geq 2$ has $\lambda_{2,1}(G) \leq \Delta^2 - 2$.
 \end{thm}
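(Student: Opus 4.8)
The plan is to rerun the two-case structure from the proof of Theorem~\ref{diam} almost verbatim, inserting the Erd\H{o}s--Fajtlowicz--Hoffman theorem at the single place where the earlier argument loses a unit. First I would dispose of $\Delta(G) = 2$: the diameter-two graphs of maximum degree $2$ form a short list of paths and cycles, so one checks directly which of them fail $\lambda_{2,1} \leq \Delta^2 - 2 = 2$, and these are precisely the small exceptional graphs named in the statement. From here on I assume $\Delta = \Delta(G) \geq 3$.

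Next I would split on whether $|V(G)| \leq 2\Delta$ or $|V(G)| \geq 2\Delta + 1$. In the first case, apply Lemma~\ref{chrom}: $\lambda_{2,1}(G) \leq |V(G)| + \chi(G) - 2 \leq 2\Delta + \chi(G) - 2$. If $G$ is complete, then $G = K_{\Delta+1}$ and $\lambda_{2,1}(G) = 2\Delta \leq \Delta^2 - 2$ since $\Delta \geq 3$; $G$ is not an odd cycle because $\Delta \geq 3$; otherwise Brooks' Theorem gives $\chi(G) \leq \Delta$, hence $\lambda_{2,1}(G) \leq 3\Delta - 2 \leq \Delta^2 - 2$, again using $\Delta \geq 3$.

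In the second case $\Delta \leq (|V(G)| - 1)/2$, so the minimum degree of $G^c$ is $|V(G)| - 1 - \Delta \geq (|V(G)| - 1)/2$; by Corollary~\ref{posacor} the complement $G^c$ has a hamilton path, and by Lemma~\ref{hamgrigg} this yields $\lambda_{2,1}(G) \leq |V(G)| - 1$. It then suffices to show $|V(G)| \leq \Delta^2 - 1$. Since $G$ has diameter two, $|V(G)| \leq \Delta^2 + 1$; the value $\Delta^2 + 1$ forces $G$ to be a Moore graph, which is excluded, and the value $\Delta^2$ is impossible for $\Delta \geq 3$ because by the Erd\H{o}s--Fajtlowicz--Hoffman theorem the only diameter-two graph of order $\Delta^2$ is $C_4$, which has $\Delta = 2$. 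Hence $|V(G)| \leq \Delta^2 - 1$ and $\lambda_{2,1}(G) \leq \Delta^2 - 2$.

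I do not expect a genuine obstacle: the whole content is the interaction of the hamilton-path bound $\lambda_{2,1}(G) \leq |V(G)| - 1$ with the sharp order bounds for diameter-two graphs. The only thing requiring care is the bookkeeping --- verifying that $2\Delta \leq \Delta^2 - 2$ and $3\Delta - 2 \leq \Delta^2 - 2$ hold precisely for $\Delta \geq 3$ (which is exactly why the $\Delta = 2$ graphs must remain on the exception list), and confirming that the exceptional graphs are those of Theorem~\ref{diam} together with the newly relevant order-$\Delta^2$ obstruction $C_4$.
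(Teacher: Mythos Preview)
Your proposal is correct and follows exactly the route the paper intends: the paper's own ``proof'' is simply the sentence that the Erd\H{o}s--Fajtlowicz--Hoffman theorem together with the proof of Theorem~\ref{diam} yields the result, and you have faithfully unpacked that implication by rerunning the two cases of Theorem~\ref{diam} and inserting the order-$\Delta^2$ obstruction in Case~2. Your case split $|V(G)|\le 2\Delta$ versus $|V(G)|\ge 2\Delta+1$ is identical to the paper's $\Delta\ge |V(G)|/2$ versus $\Delta\le (|V(G)|-1)/2$.
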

 
 We also have some comments on a special family of diameter two graphs that have $\lambda_{2,1}$ high. In order to do this, we must define the points of the \emph{Galois Plane}, denoted $PG_2(n)$. Let $F$ be a finite field of order $n$. Let $P = F^3\setminus \{(0,0,0)\}$. Define an equivalence relation $\equiv$ on $P$ by $(x_1, x_2, x_3) \equiv (y_1, y_2, y_3) \iff (x_1, x_2, x_3) = (cy_1, cy_2, cy_3) $ for some $c \in F$. The \emph{points} of $PG_2(n)$ are the equivalence classes. 
 
\begin{defin} The \emph{polarity graph of} $PG_2(n)$, denoted $H$, is the graph with the points of $PG_2(n)$ as vertices and with two vertices $(x_1, x_2, x_3)$ and $(y_1, y_2, y_3)$ adjacent if and only if $y_1x_1 + y_2x_2 + x_3 x_3 = 0$. \end{defin} 

By the properties of $PG_2(n)$, we know that the diameter of $H$ is two, $\Delta(H) = n+1$ and its order is $n^2 + n + 1 = \Delta^2 - \Delta + 1$ \cite{karteszi1976introduction}. This implies that $\lambda_{2,1}(H) \geq \Delta^2 - \Delta$. In fact, Yeh showed that $\lambda_{2,1}(H) = \Delta^2 - \Delta$ \cite{griggs92}. This is an infinite family of graphs, as finite fields exist for $n = p^k$ with $p$ prime.

However, we can improve this by one. This construction follows Erd\H{o}s, Fajtlowicz and Hoffman \cite{erdosdiam}. A vertex $(x,y,z)$ in $H$ has degree $n$ if and only if the norm $x^2 + y^2 + z^2 = 0$. Suppose $F$ has characteristic $2$ and the order of $F$ is $n$. If $(a,b,c)$ is in $H$ then it is adjacent to the point $(b+c, a + c, a + b)$ which has norm 0 and is also in $H$. In other words, every vertex in $H$ is adjacent to a vertex of degree $n$. We proceed to find the number of points of degree $n$ in $H$. Since the characteristic of $F = 2$, $f(x) = x^2$ is injective and hence surjective on $F$. This means we can choose $x^2$ and $y^2$ freely as as long as one of them is nonzero, and then $z^2$ is determined. We must also eliminate proportional pairs, so in total this leaves $(n^2 - 1)/(n-1) = n+1$ vertices of degree $n$. \\

Now we can make an $n+1$ regular, diameter two graph $\tilde{H}(n)$ by adding a vertex that is adjacent to all vertices of degree $n$. This graph is of order $n^2 + n + 2 = \Delta^2 -\Delta + 2$.

\begin{thm}
The graph $\tilde{H}(n)$ has $\lambda_{2,1}(\tilde{H}) = \Delta^2 - \Delta + 1$.
\end{thm}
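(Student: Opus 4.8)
The plan is to establish the two inequalities $\lambda_{2,1}(\tilde H) \geq \Delta^2 - \Delta + 1$ and $\lambda_{2,1}(\tilde H) \leq \Delta^2 - \Delta + 1$ separately, where $\Delta = \Delta(\tilde H) = n+1$. For the lower bound, I would use the observation already made in the text: $\tilde H(n)$ has diameter two, so every $L(2,1)$-labeling must assign distinct labels to all vertices, forcing $\lambda_{2,1}(\tilde H) \geq |V(\tilde H)| - 1 = (n^2 + n + 2) - 1 = n^2 + n + 1 = \Delta^2 - \Delta + 1$. This is immediate from the fact that $\tilde H$ has diameter $2$ (the added vertex is adjacent to the degree-$n$ vertices, and every original vertex is adjacent to a degree-$n$ vertex, so the new vertex is at distance at most $2$ from everything, while $H$ itself already had diameter $2$).

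For the upper bound, I would appeal to Lemma \ref{hamgrigg}: it suffices to exhibit a hamilton path in the complement $\tilde H(n)^c$. Equivalently, since $\tilde H$ is $(n+1)$-regular on $n^2+n+2$ vertices, $\tilde H^c$ is $(n^2 - 1)$-regular, and I would check that $\delta(\tilde H^c) = n^2 - 1 \geq (|V(\tilde H)| - 2)/2 = (n^2+n)/2$, which holds for all $n \geq 2$ (indeed $n^2 - 1 \geq (n^2+n)/2 \iff n^2 \geq n+2 \iff n \geq 2$). Then Corollary \ref{posacor} — with $k$ ranging over $0 \leq k \leq (|V(\tilde H)| - 2)/2$ and the set $\{v : d(v) \leq k\}$ being empty for every such $k$ because the minimum degree already exceeds $(|V(\tilde H)|-2)/2$ — gives a hamilton path in $\tilde H^c$, hence an injective $L(2,1)$-labeling of $\tilde H$ with span $|V(\tilde H)| - 1 = \Delta^2 - \Delta + 1$. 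Combining the two bounds yields equality.

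The main obstacle, such as it is, lies not in the hamiltonicity argument (which is a routine degree count) but in verifying the structural claims about $\tilde H(n)$ that make the degree count work: namely that $\tilde H(n)$ is genuinely $(n+1)$-regular (the new vertex has degree exactly $n+1$ because there are exactly $n+1$ vertices of norm zero, as computed in the preceding paragraph of the text, and each such vertex gains exactly one new neighbor, raising its degree from $n$ to $n+1$), that it is simple (the new vertex is not among the degree-$n$ vertices and is joined to each at most once), and that it has diameter two. Once these are in hand — and they are essentially already argued in the text preceding the theorem — the computation $n^2 - 1 \geq (n^2+n)/2$ for $n \geq 2$ closes the argument; the small cases ($n$ a prime power, so $n \geq 2$ always) need no separate treatment. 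I would also remark that this matches the claim that $\tilde H(n)$ gives the largest known infinite family with minimum span $\Delta^2 - \Delta + 1$, beating the polarity graph $H$ by one.
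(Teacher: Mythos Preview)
Your approach is essentially identical to the paper's: the lower bound comes from the diameter-two observation, and the upper bound is obtained by checking that $\delta(\tilde H^c)$ is large enough for Corollary~\ref{posacor} to yield a hamilton path, then invoking Lemma~\ref{hamgrigg} (the paper phrases this last step as ``by the proof of Theorem~\ref{diam}'' but the content is the same). One harmless arithmetic slip: $\tilde H^c$ is $n^2$-regular, not $(n^2-1)$-regular, since $|V(\tilde H)| - 1 - (n+1) = (n^2+n+2) - 1 - (n+1) = n^2$; this only makes your inequality easier, so the argument stands.
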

\begin{proof} Because $\tilde{H}$ is diameter two, $\lambda_{2,1}(\tilde{H}) \geq \Delta^2 - \Delta + 1$. As $\Delta \geq 3$, \\
$\Delta \leq (\Delta^2 - \Delta + 1)/2 = (|V(H)| - 1)/2$. By the proof of Theorem \ref{diam}, $\lambda_{2,1}(\tilde{H}) \leq |V(G)| - 1 = \Delta^2 - \Delta + 1$. \end{proof}

$\tilde{H}(n)$ exists for all $n = 2^k$, so this is an infinite family of graphs.




\section{Future Work}\label{future}
Based on the work in this paper and the similarities between the $(G, H)$-labeling problem and the $L(2,1)$-labeling problem, it seems reasonable to make the following conjecture:

\begin{con}
Let $(G, H)$ be a pair of graphs with $H \subset G$. Let $\Delta = \Delta(H) \geq 2$, and suppose $\Delta(G) \leq \Delta^2.$ Then $$Span(G,H) \leq \Delta^2.$$
\end{con}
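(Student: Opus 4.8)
The plan is to run the color-adjacency-graph machinery at the critical window $L = \Delta^2 + 1$. By the color adjacency graph lemma, it suffices to produce a proper coloring $C$ of $G$ into $\Delta^2 + 1$ classes (empty classes allowed) for which the complement $\mathcal{CGH}^c$ of the color adjacency graph of $(C, G, H)$ is traceable, since then $Span(G,H) \le L - 1 = \Delta^2$. The governing quantity is the degree of a class $V$ in $\mathcal{CGH}$: each of the $|V|$ vertices in $V$ carries at most $\Delta$ edges of $H$, so $d_{\mathcal{CGH}}(V) \le \Delta|V|$, and a class is harmless precisely when it sends $H$-edges to few other classes. To invoke Corollary \ref{posacor} on the $(\Delta^2 + 1)$-vertex graph $\mathcal{CGH}^c$ one needs the $\mathcal{CGH}$-degrees to sit below roughly $(\Delta^2+1)/2$.

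First I would dispatch the regime already handled by Theorem \ref{main}. Using the Hajnal--Szemer\'edi equitable coloring (Theorem \ref{hajnal}) with $\Delta(G)+1 \le \Delta^2 + 1$ classes makes every class of size about $|V(G)|/(\Delta^2+1)$, hence $d_{\mathcal{CGH}}(V) \lesssim \Delta\,|V(G)|/(\Delta^2+1)$; this meets the P\'osa threshold exactly while $|V(G)| = O(\Delta^3)$, which is the content of Corollary \ref{maincor}. The new difficulty is that for $|V(G)| \gg \Delta^3$ equitability alone is useless: the classes are forced to be large, their $\mathcal{CGH}$-degrees exceed the P\'osa bound, and no hamilton path can be read off from the degrees of $\mathcal{CGH}^c$ directly. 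So the real work is to choose a coloring that not merely balances class sizes but also \emph{spreads the edges of} $H$, that is, that keeps every class incident in $H$ to at most about $(\Delta^2+1)/2$ other classes regardless of how large the classes themselves grow. One natural attempt is to fix a good coloring of the bounded-degree graph $H$ (where $\Delta(H)=\Delta$) and then refine it to a proper coloring of $G$, tracking $H$-adjacencies between refined classes; another is to use Chv\'atal-type closure arguments (Theorem \ref{bondy}) that tolerate a few high-degree classes provided their number stays small.

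The step I expect to be the genuine obstacle is exactly this large-$|V(G)|$ regime, and the reason is structural: by Lemma \ref{equiv} the conjecture contains the $\Delta^2$ Conjecture of Griggs and Yeh as the special case $(G,H) = (F^2, F)$, which remains open and is established only for $\Delta$ above the Havet--Reed--Sereni threshold $\Delta_0 \approx 10^{69}$. Consequently no purely elementary hamilton-path argument can close the general statement; a complete proof must in particular reprove --- and here strengthen to the weighted $(G,H)$ model --- the probabilistic and structural decomposition methods of Havet, Reed, and Sereni. A realistic route is therefore a three-way split: small $\Delta$ (with $\Delta = 2$ already forced by the path and cycle bounds, and the smallest remaining cases amenable to the results above), the medium range $|V(G)| = O(\Delta^3)$ via Theorem \ref{main}, and large $\Delta$ via an adaptation of the Havet--Reed--Sereni machinery to edge weights in $\{1,2\}$. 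Making the hamilton-path technique and that machinery meet, with no gap in $\Delta$ left uncovered, is where I expect the difficulty to concentrate, and it is why the statement is posed as a conjecture rather than proved.
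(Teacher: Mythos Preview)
Your assessment is correct and matches the paper exactly: this statement is a \emph{conjecture}, posed in the Future Work section, and the paper offers no proof. The paper's only justification is the sentence preceding it --- that the similarities between the $(G,H)$-labeling problem and the $L(2,1)$-labeling problem make the bound ``seem reasonable'' --- together with the partial evidence provided by Theorem~\ref{main}. Your observation that the conjecture contains the Griggs--Yeh $\Delta^2$ Conjecture as the special case $(G,H)=(F^2,F)$ via Lemma~\ref{equiv}, and therefore cannot be settled by the hamilton-path machinery alone, is exactly the reason it is left open; the paper does not spell this out but it is implicit in the placement. There is nothing further to compare.
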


There may also be a way to raise the bound on $|V(G)|$ in Theorem \ref{main} slightly. In \cite{chvatal1972}, Chv\'{a}tal provides the best possible characterization of degree sequences forcing hamilton cycles. We believe that there may be some strengthening of the results in this paper based on Chv\'{a}tal's work.\\

A \emph{circular distance two labeling with span} $\mu$ is an $L(2,1)$-labeling of $G$ with $\{0, .... \mu\}$ with the added condition that if $xy$ in $G$ then $|f(x) - f(y)| \neq \mu$ \cite{liu2005circular}. This problem can be translated into the $(G, H)$-labeling setting by letting a $\mu(G, H)$-labeling be a $(G, H)$-labeling with $\{0, .... \mu\}$ with the added condition that if $xy$ in $H$ then $|f(x) - f(y)| \neq \mu$. It seems reasonable to assume that finding these labelings translates to finding a coloring $C$ of $G$ such that $\mathcal{CGH}^c$ contains a hamilton cycle. To this end, we make the following stab at an extension of this paper's results for $\mu(G, H)$-labelings. 

\begin{con}
Let $(G, H)$ be a pair of graphs with $H \subset G$, and let $\mu$ be an integer. Let $\Delta = \Delta(H) \geq 1$.
Suppose the following hold:
\begin{itemize}
\item[(i)] $\Delta(G) \leq \Delta^2,$ and
\item[(ii)] $\mu\geq \Delta^2 + 1.$
\end{itemize}
Then there exists a $\mu(G,H)$-labeling if 

$$|V(G)| \leq (\mu - \Delta)\left(\left\lfloor\frac{\mu-2}{2\Delta} \right\rfloor + 1\right) - 2.$$
\end{con}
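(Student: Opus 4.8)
The plan is to prove the conjecture by following the proof of Theorem~\ref{main} essentially verbatim, with one structural change: a $\mu(G,H)$-labeling is a \emph{circular} labeling, so the relevant object in the color adjacency graph is a hamilton \emph{cycle} rather than a hamilton path, and P\'{o}sa's Theorem (Theorem~\ref{posathm}) is applied directly instead of its path corollary, Corollary~\ref{posacor}.

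First I would prove the cyclic analogue of the color-adjacency-graph equivalence used for Theorem~\ref{main}: \emph{there is a $\mu(G,H)$-labeling of $G$ if and only if $G$ has a proper coloring $C$ with exactly $\mu+1$ classes, some possibly empty, such that $\mathcal{CGH}^c$ contains a hamilton cycle.} The proof copies the path case, except that the wrap-around edge $p_\mu p_0$ of a hamilton cycle $p_0 p_1 \cdots p_\mu p_0$ now encodes precisely the extra constraint ``$|f(x)-f(y)| \neq \mu$''. In the forward direction: if $xy \in E(H)$ lies between classes $p_i$ and $p_j$, then $p_i p_j \in E(\mathcal{CGH})$, so $p_i p_j \notin E(\mathcal{CGH}^c)$, so $p_i$ and $p_j$ are non-consecutive on the cycle, i.e.\ $|i-j| \notin \{1,\mu\}$; together with $i \neq j$ this gives $|f(x)-f(y)| \geq 2$ and $|f(x)-f(y)| \neq \mu$. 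Conversely, given a $\mu(G,H)$-labeling $f$, the classes $P_0, \dots, P_\mu$ (padding with empty classes) form a proper coloring of $G$, and $P_0 P_1 \cdots P_\mu P_0$ is a hamilton cycle in $\mathcal{CGH}^c$, since an $H$-edge between $P_i$ and $P_{i+1}$ would force two vertices with labels differing by $1$, and an $H$-edge between $P_\mu$ and $P_0$ would force two vertices with labels differing by exactly $\mu$ — both forbidden.

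With this lemma in hand, the argument proceeds exactly as for Theorem~\ref{main}. Hypotheses (i) and (ii) give $\mu \geq \Delta^2 + 1 > \Delta(G)$, so the Hajnal--Szemer\'edi theorem (Theorem~\ref{hajnal}) yields an equitable coloring $C$ of $G$ into $\mu+1$ classes; writing $|V(G)| = (\mu+1)q + r$ with $0 \leq r \leq \mu$, exactly $r$ classes have size $q+1$ and the other $\mu+1-r$ have size $q$. Since each vertex meets at most $\Delta$ edges of $H$, $d_{\mathcal{CGH}}(V) \leq \Delta |V|$, hence the degree of a class $V$ in $\mathcal{CGH}^c$ (which has $\mu+1$ vertices) is $d_c(V) \geq \mu - \Delta|V|$. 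Now one checks P\'{o}sa's hypothesis for $\mathcal{CGH}^c$: for each $k$ with $1 \leq k \leq \mu/2$, the set $\{V : d_c(V) \leq k\}$ should have fewer than $k$ elements. If $q \leq \lfloor (\mu-2)/(2\Delta)\rfloor - 1$ then $\Delta(q+1) \leq (\mu-2)/2$, so every class has $d_c(V) \geq (\mu+2)/2 > \mu/2$ and the set is empty. If $q = \lfloor(\mu-2)/(2\Delta)\rfloor$, then $d_c(V) \leq k \leq \mu/2$ forces $\Delta|V| \geq \mu/2$, hence $|V| > q$, hence $|V| = q+1$; so at most $r$ classes have $d_c(V) \leq k$, and from $|V(G)| \leq (\mu-\Delta)(q+1) - 2$ one computes $r \leq \mu - \Delta(q+1) - q - 2$, so each such class in fact satisfies $d_c(V) \geq \mu - \Delta(q+1) \geq r+1$. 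Therefore $\{V : d_c(V) \leq k\}$ is empty when $k \leq r$ and has at most $r < k$ elements when $k \geq r+1$; in either case it has fewer than $k$ elements, as required. The remaining possibility $q \geq \lfloor(\mu-2)/(2\Delta)\rfloor + 1$ cannot occur, since then already $(\mu+1)q > (\mu-\Delta)(\lfloor(\mu-2)/(2\Delta)\rfloor + 1) - 2 \geq |V(G)|$. By P\'{o}sa's Theorem $\mathcal{CGH}^c$ has a hamilton cycle, and the equivalence lemma produces the desired $\mu(G,H)$-labeling.

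I expect the main obstacle to be arithmetic rather than conceptual: one must confirm the \emph{strict} inequality in P\'{o}sa's Theorem, and it is exactly this strictness — together with the shift from $\mu$ to $\mu+1$ vertices in $\mathcal{CGH}^c$ and P\'{o}sa's threshold $(n-1)/2$ in place of the $(n-2)/2$ of Corollary~\ref{posacor} — that forces the ``$-2$'' and ``$\lfloor(\mu-2)/(2\Delta)\rfloor$'' in the conjectured bound where Theorem~\ref{main} has ``$-1$'' and ``$\lfloor(L-1)/(2\Delta)\rfloor$''. A secondary point requiring care is the bookkeeping of empty color classes and of small $\mu$ (one needs $\mu+1 \geq 3$ for P\'{o}sa, which holds since $\mu \geq \Delta^2 + 1 \geq 2$), but this is handled just as in Theorem~\ref{main}.
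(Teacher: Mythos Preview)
The paper does \emph{not} prove this statement: it appears in Section~\ref{future} (Future Work) as an open conjecture, accompanied only by a one-sentence heuristic (``The expression is motivated by the slight difference between P\'{o}sa's theorem for hamilton cycles and the modification for hamilton paths\ldots'').  So there is no proof in the paper to compare against.

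That said, your proposal is exactly the argument the paper's heuristic points toward, and it is correct.  The cyclic equivalence lemma (hamilton cycle in $\mathcal{CGH}^c$ on $\mu+1$ vertices $\Longleftrightarrow$ $\mu(G,H)$-labeling) is the natural analogue of Lemma~\ref{ham}, and your verification of P\'{o}sa's strict inequality is sound: in Case~2 you correctly obtain $r \leq \mu - \Delta(q+1) - q - 2$, which gives $d_c(V) \geq \mu - \Delta(q+1) \geq r + q + 2 \geq r+1$ for every class, so $\{V:d_c(V)\le k\}$ is empty for $k\le r$ and has at most $r<k$ elements for $k\ge r+1$.  The bookkeeping (using $\mu+1$ classes so that P\'{o}sa's range is $1\le k\le \mu/2$, and noting $\mu+1\ge 3$) is handled.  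In short, you have supplied the proof the paper only gestured at.
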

The expression is motivated by the slight difference between P\'{o}sa's theorem for hamilton cycles and the modification for hamilton paths. The degree of the color adjacency graph has to be a half unit greater for P\'{o}sa's theorem about cycles to work, and there must be one fewer low degree vertex of each degree. After some algebra, this results in the expression in the conjecture.

\section{Acknowledgements}\label{ack}

I would like to thank my thesis advisor, Jerrold Griggs, who was a great help throughout the process. I also thank the other members of the committee, Linyuan Lu and L\'{a}zl\'{o} Sz\'{e}kely, and my academic advisor, Dr. \'{E}va Czabarka. Finally, I would like to thank my family and friends for their support. We all know it would be impossible without them.

\bibliographystyle{plain}
\bibliography{cheez}

\end{document}